\newtheorem{theorem}{Theorem}[section]
\newtheorem{lemma}{Lemma}[section]
\theoremstyle{remark}
  \newtheorem{remark}{Remark}[section]
\numberwithin{equation}{section}
\DeclareMathOperator{\arctanh}{arctanh}
\DeclareMathOperator{\arccosh}{arccosh}
\title[Compact embedded $H$-surfaces in $\mathbb{S}^2\times\mathbb{R}$]{Compact embedded surfaces with constant mean curvature in $\mathbb{S}^2\times\mathbb{R}$.}
\author{Jos\'{e} M. Manzano}
\address{Instituto de Ciencias Matem\'{a}ticas -- CSIC\\
Madrid, Spain}
\email{manzanoprego@gmail.com}
\author{Francisco Torralbo}
\address{Departamento de Matem\'{a}ticas\\
Universidad de C\'{a}diz, Spain}
\email{francisco.torralbo@uca.es}
\thanks{The authors are partially supported by the Spanish MINECO-Feder research projects MTM2014-52368-P and MTM2017-89677-P. The first author is also supported by the ICMAT–Severo Ochoa grant SEV-2015-0554.}
\subjclass[2010]{Primary 53A10; Secondary 53C30}
\keywords{Constant mean curvature surfaces, compact embedded surfaces, homogeneous three-manifolds, product spaces, conjugate constructions}
\begin{document}

\begin{abstract}
We obtain compact orientable embedded surfaces with constant mean curvature $0<H<\frac{1}{2}$ and arbitrary genus in $\mathbb{S}^2\times\mathbb{R}$. These surfaces have dihedral symmetry and desingularize a pair of spheres with mean curvature $\frac{1}{2}$ tangent along an equator. This is a particular case of a conjugate Plateau construction of doubly periodic surfaces with constant mean curvature in $\mathbb{S}^2\times\mathbb{R}$, $\mathbb{H}^2\times\mathbb{R}$, and $\mathbb{R}^3$ with bounded height and enjoying the symmetries of certain tessellations of $\mathbb{S}^2$, $\mathbb{H}^2$, and $\mathbb{R}^2$ by regular polygons.
\end{abstract}

\maketitle

\section{Introduction}

In the theory of constant mean curvature $H$ surfaces ($H$-surfaces in the sequel), the existence of compact embedded examples in different three-manifolds has been an appealing problem whose origins probably date back to the mid twentieth century and the Alexandrov problem. Although simply connected ambient manifolds with non-positive curvature usually admit very few examples, the moduli space of compact embedded $H$-surfaces when the ambient space is positively curved seems to be crowded. This can be illustrated by the recent result by Marques and Neves~\cite{MarNev} that any three-manifold with dimension at most seven and positive Ricci curvature admits an infinite number of compact embedded minimal hypersurfaces (i.e., with $H=0$). Assuming the ambient spaces to be simply connected makes it harder to find such minimal surfaces in the sense that minimizing area in a isotopy class is not a suitable method (see~\cite{MSY}).

We will focus on the simply connected three-manifold $\mathbb{S}^2\times\mathbb{R}$ with non-nega\-tive Ricci curvature, where we will overtake the problem of determining the possible topological types of compact embedded $H$-surfaces. Similar problems have been considered in the literature, e.g., Lawson~\cite{Lawson} showed that a compact surface can be minimally embedded in the three-sphere $\mathbb{S}^3$ if and only if it is orientable, and the second author extended this result to the larger family of Berger spheres~\cite{Tor2011}; moreover, the authors and Plehnert~\cite{MPT} showed that a compact surface can be minimally embedded in a Riemannian product $\mathbb{S}^2\times\mathbb{S}^1(r)$ if and only if it has even Euler characteristic. Although the only compact minimal surfaces immersed in $\mathbb{S}^2\times\mathbb{R}$ are the horizontal slices $\mathbb{S}^2\times\{t_0\}$, $t_0\in\mathbb{R}$, in the non-compact case Hoffman, Traizet and White~\cite{HTW} proved the existence of embedded minimal surfaces in $\mathbb{S}^2\times\mathbb{R}$ with two helicoidal ends and arbitrary genus.

On the contrary, the moduli space of compact embedded $H$-surfaces seems to be plentiful if $H>0$. Other than rotationally invariant spheres and tori, the authors~\cite{MT} provided the first non-trivial compact embedded $H$-surfaces, namely, a $1$-parameter family of $H$-tori resembling horizontal unduloids. If $0<H<\frac{1}{2}$, the first author proved in~\cite[Corollary~3.5]{Manzano} that any compact embedded $H$-surface in $\mathbb{S}^2\times\mathbb{R}$ must be a bigraph over a domain of $\mathbb{S}^2$ whose complement consists of finitely-many strictly convex topological disks, and here we will prove that such $H$-surfaces exist for all genera, i.e., for any number of disks. It is worth emphasizing the geometric content of the value $H=\frac{1}{2}$ in $\mathbb{S}^2\times\mathbb{R}$, for it has not been noticed probably in the literature yet. On the one hand, the convexity of the complement fails to hold in general if $H>\frac{1}{2}$ as shown by the horizontal unduloids in~\cite{MT}. On the other hand, $H=\frac{1}{2}$ is precisely the value such that rotationally invariant $H$-spheres are bigraphs over hemispheres. Since going over the equator leads to self-intersections that prevent the use of conjugate techniques and the existence of symmetric examples, we daresay (at least heuristically) that this value should be relevant in the classification of embedded $H$-surfaces.

Our existence result relies on a more general construction in the Riemannian product $\mathbb{M}^2(\epsilon)\times\mathbb{R}$, where $\mathbb{M}^2(\epsilon)$ stands for the simply connected complete surface with constant curvature $\epsilon\in\{-1,0,1\}$. Throughout the text, given integers $k,m\geq 2$ (not both equal to $2$), an $(m,k)$-tessellation of $\mathbb{M}^2(\epsilon)$ will be a tessellation of $\mathbb{M}^2(\epsilon)$ by regular $m$-gons such that $k$ of them meet at each vertex, see Figure~\ref{fig:tessellation}. The centers and vertexes of the $m$-gons will be called the centers and vertexes of the tessellation. An straightforward application of the Gau\ss-Bonnet formula to one of the polygons reveals that $\epsilon$ is uniquely determined as the sign of $\frac{1}{k}+\frac{1}{m}-\frac{1}{2}$. More explicitly,
\begin{itemize}
 	\item If $\frac{1}{k}+\frac{1}{m}>\frac{1}{2}$, then we get a tessellation of $\mathbb{S}^2$. There are two infinite families (the \emph{beach ball} with $m=2$ and the degenerated case $k=2$) plus five exceptional cases corresponding to the Platonic solids, see Table~\ref{table}.
 	\item If $\frac{1}{k}+\frac{1}{m}=\frac{1}{2}$, then $(m,k)\in\{(3,6),(4,4),(6,3)\}$ and we get the three tessellations of $\mathbb{R}^2$ corresponding to triangles, squares and hexagons. 
 	\item If $\frac{1}{k}+\frac{1}{m}<\frac{1}{2}$, then we get tessellations of $\mathbb{H}^2$.
 \end{itemize}
The following theorem ensures that we can find $H$-surfaces in $\mathbb{M}^2(\epsilon)\times\mathbb{R}$ with the symmetries of $(m,k)$-tessellations provided that $H$ is suitably bounded.

\begin{table}[bp]
\begin{center}
\begin{tabular}{cccc} \toprule
Initial tessellation&$(m,k)$&$\alpha(m,k)$&Genus of $\Sigma_{H,m,k}$ \\\midrule
\emph{Beach ball}&$(2,g+1)$&$1$&$g$\\
\emph{Degenerated}&$(g+1,2)$&$\cot^2(\frac{\pi}{2+2g})$&$1$\\\midrule
Tetrahedron&$(3,3)$&$2+\sqrt{3}$&$3$\\
Hexahedron&$(4,3)$&$5+2\sqrt{6}$&$5$\\
Octahedron&$(3,4$)&$3+2\sqrt{2}$&$7$\\
Dodecahedron&$(5,3)$&$8+4 \sqrt{3}+3 \sqrt{5}+2 \sqrt{15}$&$11$\\
Icosahedron&$(3,5)$&$4+\sqrt{5}+2 \sqrt{5+2 \sqrt{5}}$&$19$\\\bottomrule
\end{tabular}
\end{center}
\caption{$(m, k)$-tessellations and compact orientable surfaces embedded in $\mathbb{S}^2\times\mathbb{R}$ with $H< \alpha(m, k)$ and $g\geq 2$, see Theorem~\ref{thm:embeddedness}.}\label{table}
\end{table}

\begin{theorem}\label{thm:tessellations}
Let $k,m\geq 2$ be integers, and let $\epsilon\in\{-1,0,1\}$ be the sign of $\frac{1}{k}+\frac{1}{m}-\frac{1}{2}$. Given $H\in\mathbb{R}$ such that $0<4H^2<\alpha(m,k)$, where
\[\alpha(m,k)=\begin{cases}\frac{\sin(\frac{\pi}{k})+\cos(\frac{\pi}{m})}{\epsilon(\sin(\frac{\pi}{k})-\cos(\frac{\pi}{m}))}&\text{if }\epsilon\neq 0,\\\infty&\text{if }\epsilon=0,\end{cases}\]
there is a connected $H$-surface $\Sigma_{H,m,k}\subset\mathbb{M}^2(\epsilon)\times\mathbb{R}$ with the symmetries of a $(m,k)$-tessellation of $\mathbb{M}^2(\epsilon)$ and symmetric with respect to $\mathbb{M}^2(\epsilon)\times\{0\}$. 
Furthermore:
\begin{enumerate}
  \item[(a)] If $4H^2\to 0$, then $\Sigma_{H,m,k}$ converges to a double cover of $\mathbb{M}^2(\epsilon)\times\{0\}$ with singularities at the centers of the $(m,k)$-tessellation.

  \item[(b)] If $4H^2\to \alpha(m,k)$, then $\Sigma_{H,m,k}$ converges to a stack of tangent $H$-spheres of $\mathbb{M}^2(\epsilon)\times \mathbb{R}$ with centers at the vertexes of the $(m,k)$-tessellation.
\end{enumerate}
\end{theorem}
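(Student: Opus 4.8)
The plan is to realize $\Sigma_{H,m,k}$ as the conjugate (sister) surface of a minimal surface obtained by solving a Plateau problem in a suitable Bianchi--Cartan--Vranceanu space. By Daniel's sister correspondence, an $H$-surface in $\mathbb{M}^2(\epsilon)\times\mathbb{R}=\mathbb{E}(\epsilon,0)$ arises as the conjugate, with rotation angle $\frac{\pi}{2}$, of a minimal surface in $\mathbb{E}(\epsilon+4H^2,H)$, the invariant $\kappa-4\tau^2=\epsilon$ fixing the target. I would first record the boundary dictionary: a horizontal geodesic of the minimal surface (orthogonal to the vertical Killing field) conjugates to a curve of the $H$-surface contained in a vertical totally geodesic plane and meeting it orthogonally, whereas a vertical geodesic (a fiber segment) conjugates to a curve contained in a horizontal slice $\mathbb{M}^2(\epsilon)\times\{t_0\}$ and meeting it orthogonally. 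These are precisely the mirror curves along which Schwarz reflection will be applied later.

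For $\epsilon=1$ the sister space $\mathbb{E}(1+4H^2,H)$ is a compact Berger sphere, so the Plateau problems below can produce compact pieces; for $\epsilon\in\{0,-1\}$ one works in a non-compact $\mathbb{E}(\kappa,\tau)$. The next step is to define a geodesic contour $\widetilde\Gamma$ built from horizontal geodesic arcs and one vertical fiber segment, whose projection to the base captures the fundamental triangle of the $(m,k)$-tessellation, with angles $\frac{\pi}{m}$ at a center, $\frac{\pi}{k}$ at a vertex, and $\frac{\pi}{2}$ at an edge midpoint. The condition $0<4H^2<\alpha(m,k)$ is exactly the trigonometric requirement that this geodesic polygon exists, embeds, and projects to a convex region of the base; I would derive $\alpha(m,k)$ as the closing condition of the contour and check that its sign analysis reproduces the three cases, noting that $\sin\frac{\pi}{k}>\cos\frac{\pi}{m}$ holds if and only if $\frac1k+\frac1m>\frac12$, which matches $\epsilon=1$.

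I would then solve the Plateau problem for $\widetilde\Gamma$: since it projects monotonically over a convex domain, the least-area disk $\widetilde\Sigma$ is a graph, hence stable, unique and embedded up to the boundary, with boundary arcs the prescribed geodesics. Conjugating yields a fundamental $H$-piece $\Sigma=\widetilde\Sigma^{*}$ in $\mathbb{M}^2(\epsilon)\times\mathbb{R}$ bounded by vertical mirror curves and one horizontal mirror curve, symmetric with respect to $\mathbb{M}^2(\epsilon)\times\{0\}$. Successive reflections across the vertical planes generate the dihedral groups of order $2m$ and $2k$ about the fibers over the center and the vertex --- here the angles $\frac{\pi}{m}$ and $\frac{\pi}{k}$ guarantee the rotations close up after finitely many steps --- while reflection across $\{t=0\}$ produces the second sheet. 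Together these generate the full symmetry group of the $(m,k)$-tessellation, yielding the connected surface $\Sigma_{H,m,k}$. Smoothness across the mirror curves follows from the orthogonality recorded in the first step, and one must verify that all reflections are mutually compatible.

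Finally, I would analyze the two limits by letting $4H^2$ run to the endpoints of $(0,\alpha(m,k))$ and controlling the family $\widetilde\Sigma$ through stability-based curvature estimates. As $4H^2\to0$ the sister space converges to $\mathbb{E}(\epsilon,0)=\mathbb{M}^2(\epsilon)\times\mathbb{R}$, the vertical part of $\widetilde\Gamma$ collapses, the height of $\Sigma_{H,m,k}$ tends to zero, and I expect the two sheets to converge to $\mathbb{M}^2(\epsilon)\times\{0\}$ with conical singularities exactly at the centers, giving the double cover in~(a). As $4H^2\to\alpha(m,k)$ the contour degenerates so that each fundamental piece matches the generating arc of a rotationally invariant $H$-sphere, and I would identify the limit as the stack of mutually tangent $H$-spheres centered at the vertexes in~(b). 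The main obstacle I anticipate is twofold: pinning down the exact conjugate boundary behavior so that the reflections close up into a globally well-defined symmetric surface (the period problem), and carrying out the two singular limits rigorously, since both require uniform geometric control of the minimal disks as the contour degenerates together with a precise identification of the limiting configuration.
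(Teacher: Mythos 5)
Your overall framework (Daniel correspondence, boundary dictionary for horizontal/vertical geodesics, Plateau solution as a graph, Schwarz reflections) is the same as the paper's, but there is a genuine gap at the heart of the argument: the period problem, which you name only as an ``anticipated obstacle'' and never solve, and which your setup in fact makes unsolvable. You prescribe the contour $\widetilde\Gamma\subset\mathbb{E}(4H^2+\epsilon,H)$ so that its projection is a triangle with all three angles $\frac{\pi}{2},\frac{\pi}{k},\frac{\pi}{m}$. Conjugation preserves the angles at the two vertexes where the surface is horizontal, but it does not preserve lengths: if $h_2=(\beta_2,z_2)$ is the sister of the horizontal side $\widetilde h_2$, then $\mathrm{Length}(\beta_2)=-\int_{\widetilde h_2}\nu<\mathrm{Length}(\widetilde h_2)$, and moreover the base curvatures $4H^2+\epsilon$ and $\epsilon$ differ. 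Since in nonzero curvature a triangle is rigid once its three angles are fixed, prescribing all three angles leaves no free parameter, and the triangle cut out in $\mathbb{M}^2(\epsilon)$ by the vertical mirror planes of the sister piece will have the correct angles $\frac{\pi}{2}$ and $\frac{\pi}{k}$ at two vertexes but generically the wrong side length $\ell$ between them, hence the wrong third angle: the reflected copies will not close up into an $(m,k)$-tessellation. The paper instead prescribes only two angles ($\frac{\pi}{2}$ and $\frac{\pi}{k}$) of the triangle $\Delta\subset\mathbb{M}^2(4H^2+\epsilon)$, keeps its side length $\widetilde\ell$ as a free parameter, and solves $\ell(\widetilde\ell)=\ell_{\mathrm{target}}$ by a continuity argument ($\ell\to 0$ as $\widetilde\ell\to 0$, together with the key inequality $\ell_{\mathrm{target}}<\ell_{\mathrm{limit}}$).

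Relatedly, your identification of $\alpha(m,k)$ as ``the closing condition of the contour'' in the initial space is incorrect, and this matters because it is precisely where the hypothesis $4H^2<\alpha(m,k)$ enters. In the paper, $\alpha(m,k)$ arises in the supercritical case from comparing $\ell_{\mathrm{target}}$ (a datum of the tessellation of $\mathbb{M}^2(\epsilon)$) with the explicitly computed $\ell_{\mathrm{limit}}$, which equals $2\arctan(\frac{1}{2H})$ when $\epsilon=1$ and is the radius of the disk over which the rotationally invariant $H$-sphere of $\mathbb{M}^2(\epsilon)\times\mathbb{R}$ is a bigraph; the inequality $\ell_{\mathrm{target}}<\ell_{\mathrm{limit}}$ is then equivalent to $4H^2<\alpha(m,k)$. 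In the subcritical and critical regimes the same continuity scheme requires lower bounds for $\ell_{\mathrm{limit}}$, which the paper obtains from barriers (an umbrella together with a ruled minimal surface invariant under hyperbolic translations along a horizontal geodesic); your proposal contains no analog of these barrier arguments. Without (i) a free parameter in $\Delta$, (ii) the continuity argument for $\ell$, and (iii) the comparison $\ell_{\mathrm{target}}<\ell_{\mathrm{limit}}$ carried out separately in the three regimes of $4H^2+\epsilon$, the construction does not yield a surface with the symmetries of the $(m,k)$-tessellation, and the role of the bound $\alpha(m,k)$ remains unexplained.
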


\begin{figure}[hbp]\label{fig:4-gon-tessellation}
\centering
\includegraphics{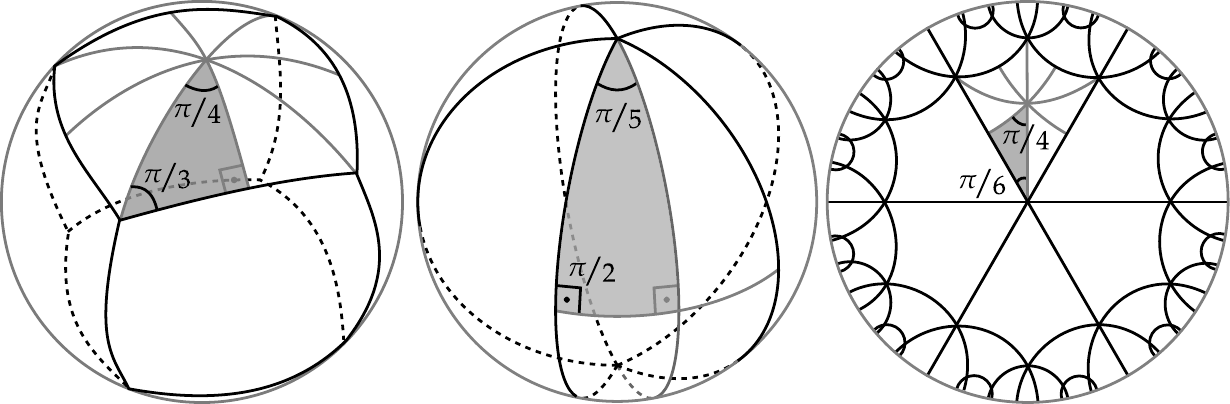}
\caption{A $(4,3)$-tessellations of $\mathbb{S}^{2}$ corresponding to the hexahedron (left) a $(2,5)$-tessellation (\emph{beach ball tessellation}, middle) and a $(4,6)$-tessellation of $ \mathbb{H}^2$ (right). The shaded regions represent the target triangles in our construction, see also Figure~\ref{fig:first-polygon}.}\label{fig:tessellation}
\end{figure}

Theorem~\ref{thm:tessellations} was already known in the Euclidean space. Lawson~\cite{Lawson} used a conjugation argument to show the existence of doubly periodic $H$-surfaces with bounded height in $\mathbb{R}^3$ over square and triangular lattices, which was revisited and extended by Gro\ss e-Brauckmann~\cite{KGB}. Note that $\alpha(m,k)=\infty$ is due to the fact that we can reach any value of the mean curvature by homotheties of $\mathbb{R}^3$. Therefore we will restrict our discussion to $\epsilon\in\{-1,1\}$, where homotheties fail to exist and the size of the $m$-gons is determined by $(m,k)$, but our proof also works when $\epsilon=0$. Note that a $(k,m)$-tessellation and an $(m,k)$-tessellation are \emph{dual} in the sense that they can be obtained from each other by swapping centers and vertexes, and hence they have the same isometry group, but Theorem~\ref{thm:tessellations} produces non-congruent surfaces for dual tessellations.

The proof of Theorem~\ref{thm:tessellations} is inspired by the conjugate construction the authors~\cite{MT} performed for $H=\frac{1}{2}$ and $\epsilon=-1$. It is worth pointing out that some difficulties show up with respect to~\cite{MT}. First, we need to start from a minimal disk in $\mathrm{Nil}_3$ when $H$ is critical (i.e., $4H^2+\epsilon=0$), in a Berger spheres $\mathbb{S}^3_b$ when $H$ is supercritical (i.e., $4H^2+\epsilon>0$), or in the special linear group $\widetilde{\mathrm{Sl}}_2(\mathbb{R})$ when $H$ is subcritical (i.e., $4H^2+\epsilon<0$), whereas only the critical case was treated in~\cite{MT}. Second, we need to produce suitable barriers that guarantee that the $H$-surface has the desired symmetries, and study the different limit surfaces that appear in each range of $H$ separately. A brief introduction to $\mathbb{E}(\kappa,\tau)$-spaces and conjugate techniques will be given in Sections~\ref{sec:preliminaries} and~\ref{sec:fundamental-piece}, see also~\cite{Daniel07, HST,MPT,MT,Plehnert2}.

The second part of this paper deals with the embeddedness of $\Sigma_{H,k,m}$ in $\mathbb{S}^2\times\mathbb{R}$. As pointed out above, in $\mathbb{S}^2$ the value $H=\frac{1}{2}$ is not critical but has a special interpretation. In particular, for any $0<H<\frac{1}{2}$, we get the existence compact embedded $H$-surfaces in $\mathbb{S}^2\times\mathbb{R}$ with arbitrary genus, inducing compact non-orientable embedded $H$-surfaces in $\mathbb{R}\mathrm{P}^2\times\mathbb{R}$ with arbitrary even genus.

\begin{theorem}\label{thm:embeddedness}
Let $0 < H < \frac{1}{2}$ and $g\geq 2$. Then $\Sigma_{H,2,g+1}$ is a compact orientable $H$-bigraph embedded in $\mathbb{S}^2\times\mathbb{R}$ with genus $g$. Furthermore:
\begin{enumerate}
    \item[(a)] The limit of $\Sigma_{H,2,g+1}$ as $H\to\frac{1}{2}$ is a pair of $\frac{1}{2}$-spheres tangent along an equator; the limit of $\Sigma_{H,2,g+1}$ as $H\to 0$ is a double cover of $\mathbb{S}^2\times \{0\}$ with singularities at $g+1$ points evenly distributed along an equator.
  \item[(b)] If $g=2n-1$, then $\Sigma_{H,2,g+1}$ is antipodally invariant and induces a two-sided non-orientable $H$-surface of genus $2n$ in $\mathbb{R}\mathrm{P}^2\times\mathbb{R}$.
\end{enumerate}
\end{theorem}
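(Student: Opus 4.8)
The plan is to specialise Theorem~\ref{thm:tessellations} to $(m,k)=(2,g+1)$ and then upgrade its conclusions to embeddedness and the topological statements. For this choice the sign of $\frac{1}{g+1}+\frac12-\frac12=\frac{1}{g+1}$ is positive, so $\epsilon=1$ and $\alpha(2,g+1)=1$ (the \emph{beach ball} row of Table~\ref{table}); hence the hypothesis $0<4H^2<\alpha(2,g+1)$ reads exactly $0<H<\frac12$, and Theorem~\ref{thm:tessellations} already produces $\Sigma_{H,2,g+1}$ together with part~(a): items~(a) and~(b) of that theorem specialise verbatim, because for the beach ball the $g+1$ centres are equally spaced along an equator and the two vertexes are the poles, while a $\frac12$-sphere is a bigraph over a hemisphere meeting $\mathbb{S}^2\times\{0\}$ along its boundary equator. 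It therefore remains to establish embeddedness, the genus, and part~(b).

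The core of the proof is embeddedness. The surface is assembled by Schwarz reflection of a fundamental piece $\Sigma$, the conjugate of a minimal Plateau disk in the Berger sphere $\mathbb{S}^3_b$ (here $4H^2+\epsilon>0$, the supercritical case). I would first show that the horizontal projection $\Sigma\to\mathbb{S}^2$ is a diffeomorphism onto the region obtained from the geodesic triangle $T$ with angles $\frac{\pi}{g+1},\frac{\pi}{2},\frac{\pi}{2}$ — a fundamental domain of the dihedral symmetry group of the beach ball, with vertexes at a pole $P$, a lune centre $C$, and the point $V$ where a lune edge meets the equator — by deleting a neighbourhood of $C$. Three ingredients are needed: (i) the angle function of $\Sigma$ (the inner product of its unit normal with the vertical Killing field) does not vanish in the interior, so $\Sigma$ has no horizontal tangent plane and is locally a vertical graph; (ii) $\partial\Sigma$ projects injectively, its image consisting of the two meridian sides $PV$ and $PC$ and an equatorial arc (along which $\Sigma$ lies in vertical totally geodesic planes), together with a strictly convex arc $\partial D$ around $C$ where $\Sigma$ becomes vertical; and (iii) a Rad\'o--Krust type argument upgrading (i)--(ii) to a global graph. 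Reflecting this graph across the three vertical mirrors over the sides of $T$, whose angles are submultiples of $\pi$, tiles $\Omega:=\mathbb{S}^2\setminus(D_1\cup\cdots\cup D_{g+1})$ by copies with pairwise disjoint interiors and matches up heights, producing a surface that is embedded and symmetric with respect to $\mathbb{S}^2\times\{0\}$; here $D_1,\dots,D_{g+1}$ are the strictly convex disks centred at the $g+1$ lune centres, in accordance with~\cite[Corollary~3.5]{Manzano}.

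With embeddedness in hand the remaining assertions are topological. Since $\Sigma_{H,2,g+1}$ is a bigraph over $\Omega$, glued to its reflection along the $g+1$ circles $\partial D_i$ where it meets $\mathbb{S}^2\times\{0\}$, it is the double of $\Omega$, so $\chi(\Sigma_{H,2,g+1})=2\chi(\Omega)=2\bigl(2-(g+1)\bigr)=2-2g$ and the genus is $g$; orientability is automatic, as any two-sided surface in the orientable manifold $\mathbb{S}^2\times\mathbb{R}$ is orientable. For part~(b), write $g+1=2n$. The antipodal map $A$ of $\mathbb{S}^2$ factors as the composition of the rotation by $\pi$ about the polar axis (a symmetry of the beach ball precisely because $g+1$ is even) and the reflection across the equator (always a symmetry, realised as a Schwarz reflection across a vertical totally geodesic plane of the construction), both of which lift to symmetries of $\Sigma_{H,2,g+1}$; hence $A\times\mathrm{id}_{\mathbb{R}}$ maps $\Sigma_{H,2,g+1}$ to itself. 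This involution is free, and it reverses the ambient orientation of $\mathbb{S}^2\times\mathbb{R}$ (as $A$ has degree $-1$) while it also reverses the orientation of $\Sigma_{H,2,g+1}$ (because $A$ reverses that of the base $\Omega$); consequently it preserves the coorientation, so the quotient surface in $\mathbb{R}\mathrm{P}^2\times\mathbb{R}=(\mathbb{S}^2\times\mathbb{R})/(A\times\mathrm{id}_{\mathbb{R}})$ is two-sided, and it is non-orientable with $\chi=\tfrac12(2-2g)=1-g=2-2n$, i.e.\ of non-orientable genus $2n$.

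The main obstacle is parts~(ii)--(iii) of the embeddedness argument: controlling the conjugate boundary curves finely enough that the projection of $\Sigma$ remains inside $T$ and is injective, and that distinct reflected tiles do not overlap. This is exactly where the bound $H<\frac12$ enters. As stressed after Theorem~\ref{thm:tessellations}, $\frac12$ is the value at which rotationally invariant $H$-spheres become bigraphs over full hemispheres; for $H\geq\frac12$ the fundamental piece would be forced over the equator and develop self-intersections, whereas $H<\frac12$ keeps each disk $D_i$ strictly inside a hemisphere and makes the triangular tiling injective, which is what ultimately yields an embedded surface.
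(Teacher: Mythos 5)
Your reduction to Theorem~\ref{thm:tessellations} (with $\epsilon=1$ and $\alpha(2,g+1)=1$), your genus count via doubling the domain $\Omega$ (equivalent to the paper's Gau\ss--Bonnet computation), and your treatment of part~(b) --- factoring the antipodal map as the rotation by $\pi$ about the polar axis composed with the reflection across the equator, and checking that the resulting free involution reverses both the ambient and the surface orientation, hence preserves the co-orientation --- are all correct; indeed your part~(b) is more explicit than the paper's own. The gap lies in the one claim that carries all the weight: embeddedness. You list the three ingredients (i)--(iii) and then, in your closing paragraph, concede that (ii)--(iii) are ``the main obstacle'', offering only the heuristic that $H<\frac12$ ``keeps each disk $D_i$ strictly inside a hemisphere''. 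That is a restatement of what must be proved, not a proof; nothing in your proposal produces the convexity of the projected horizontal curve, the containment of the projected boundary in the fundamental triangle, or the injectivity of the projection.

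These are exactly the contents of the paper's Lemmas~\ref{lemma:embeddedness1} and~\ref{lemma:embeddedness2}. First, the strict convexity of $\pi\circ v$ is obtained by adapting \cite[Corollary~3.5]{Manzano}: the function $\psi=h+\frac{4H^2}{\sqrt{1+4H^2}}\arctanh\bigl(\frac{\nu}{\sqrt{1+4H^2}}\bigr)$ is subharmonic and nonconstant on the upper half $\Sigma_+$ and vanishes along $\partial\Sigma_+$, so $\partial\psi/\partial\eta>0$ there, and converting this inequality into positive geodesic curvature of the projected boundary is precisely where the hypothesis $0<H<\frac12$ enters analytically --- not through a picture of hemispheres. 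Second, keeping the projected boundary inside $\Omega$ requires quantitative estimates on the conjugate boundary curves: $\mathrm{Length}(\beta_1)\le \pi/\bigl((g+1)\sqrt{4H^2+1}\bigr)<\pi/(g+1)$, and $\mathrm{Length}(\beta_3)<\pi/2$ via a monotonicity-in-$\ell$ (foliation) argument comparing with the limit $\frac12$-sphere, whose bigraph radius is $\pi/2$. Finally, even granted convexity, one must rule out that the arc $\pi\circ v$ leaves $\Omega$ or meets the curves $\beta_i$; the paper does this by noting that otherwise $\pi(\partial\Sigma)$ would not bound an Alexandrov-embedded domain, contradicting that $\pi|_\Sigma$ is an immersion (the angle function is negative in the interior). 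None of these steps appears in your proposal, so as written the central assertion of the theorem is assumed rather than established.
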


\begin{figure}[hbp]
\centering
\includegraphics{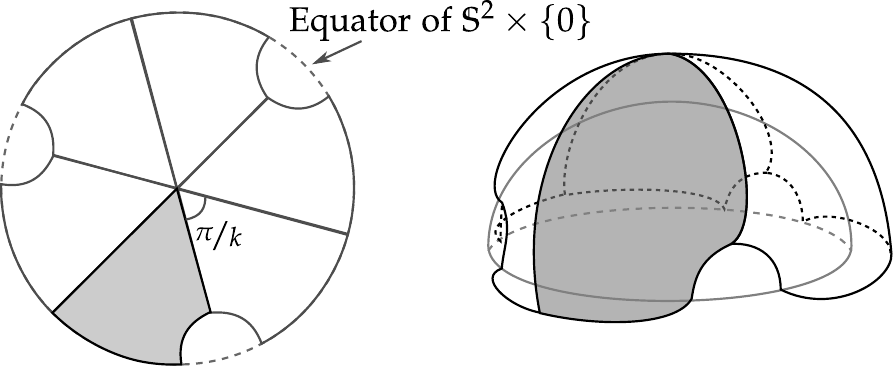}
\caption{Sketch of $\Sigma_{H,2,3}$ in the the model $\mathbb{R}^3\setminus\{0\}$ of $\mathbb{S}^2\times\mathbb{R}$ (right) and its top view (left). Observe that one needs $8(g+1)=24$ copies of the fundamental piece to get the complete bigraph.}
\label{fig:compact-surface}
\end{figure}

The case $g\leq 1$ is much simpler, since one can easily find rotationally invariant compact embedded $H$-surfaces in $\mathbb{S}^2\times\mathbb{R}$ with genus $0$ or $1$ (and arbitrary $H>0$), see~\cite{Manzano}. Pedrosa~\cite{Pedrosa} proved that isoperimetric regions in $\mathbb{S}^2\times\mathbb{R}$ are bounded by spheres, so the surfaces provided by Theorem~\ref{thm:embeddedness} cannot be isoperimetric. Other examples of non-rotationally invariant (and possibly non-embedded) tori in $\mathbb{S}^2\times\mathbb{R}$ were obtained by the authors in~\cite{MT}, some of which can be recovered as the degenerated case $\Sigma_{H,m,2}$ ($m\geq 3$) in Theorem~\ref{thm:tessellations}. Since any compact $H$-surface $\Sigma$ immersed in $\mathbb{S}^2\times\mathbb{R}$ must be orientable (if $H=0$, then $\Sigma$ must be a horizontal sphere by maximum principle; if $H>0$ then $\Sigma$ is trivially $2$-sided and hence orientable), Theorem~\ref{thm:embeddedness} implies the following nice statement:
\begin{quotation}
    \itshape
A compact surface can be embedded in $\mathbb{S}^2\times\mathbb{R}$ with constant mean curvature $0<H<\frac{1}{2}$ if and only if it is orientable.
\end{quotation}
Observe that any two compact embedded $H$-surfaces in $\mathbb{S}^2\times\mathbb{R}$ (for any $H>0$) with the same genus are isotopic, i.e., one can be deformed into the other by a continuous family of diffeomorphisms of $\mathbb{S}^2\times\mathbb{R}$ (see Remark~\ref{rmk:isotopy}). This is similar to Lawson's result for compact embedded minimal surfaces in $\mathbb{S}^3$, see~\cite{Law1970}.

The argument of the proof of Theorem~\ref{thm:embeddedness} consists in showing that the boundary of the fundamental piece of $\Sigma_{H,2,g+1}$ projects one-to-one to $\mathbb{S}^2$, and this relies on a subtle analysis of the lenght of the components of the projected boundary, as well as their geodesic curvature (by adapting the technique in~\cite{Manzano}). We remark that the same approach applies to the tessellations corresponding to the Platonic solids and the degenerated case $k=2$, so it is also possible to obtain compact embedded $H$-surfaces with $0<H<\frac{1}{2}$ in the cases listed in Table~\ref{table}. The hexahedron and octahedron also induce compact non-orientable embedded surfaces in $\mathbb{R}\mathrm{P}^2\times\mathbb{R}$ of genus $6$ and $8$, respectively. 

Let us finally discuss a couple of open questions. First, the embeddedness of our examples in $\mathbb{H}^2\times\mathbb{R}$ is not known (in $\mathbb{H}^2\times\mathbb{R}$ a Krust type theorem for $H$-surfaces is expected to hold true, as in the minimal case~\cite{HST}, which would prove our surfaces are embedded whenever it applies). Even in $\mathbb{R}^3$, the embeddedness of the surfaces has not been proved as far as we know. Second, our construction strongly depends on the size of spheres, which makes it possible only if $0<H<\frac{1}{2}$. It would be quite interesting to extend Theorem~\ref{thm:embeddedness} to any $H>0$.

\section{Preliminaries}\label{sec:preliminaries}

Given $\kappa,\tau\in\mathbb{R}$, let $\mathbb{M}^2(\kappa)$ be the simply connected surface with constant curvature $\kappa$ and let $\mathbb{E}(\kappa,\tau)$ be the simply-connected homogeneous three-manifold characterized by admitting a Killing submersion $\pi: \mathbb{E}(\kappa, \tau) \to \mathbb{M}^2(\kappa)$ with constant bundle curvature $\tau$ and constant Killing length over $\mathbb{M}^2(\kappa)$, see~\cite{Daniel07,LerMan}. The Killing submersion gives rise to natural notions of vertical and horizontal directions (as those tangent and orthogonal to the fibers of the submersion, respectively) which will be considered in the sequel.
    

The Daniel sister correspondence~\cite{Daniel07} establishes an isometric duality between minimal immersions $\widetilde\phi:\Sigma\to\mathbb{E}(4H^2+\epsilon,H)$, and $H$-immersions $\phi:\Sigma\to\mathbb{M}^2(\epsilon)\times\mathbb{R}$, where $H\in\mathbb{R}$ and $\epsilon\in\{-1,0,1\}$ are arbitrary, and $\Sigma$ stands for a simply connected Riemannian surface. It is well known that sister immersions enjoy the following properties (e.g., see~\cite{Daniel07,MT,Plehnert2}):
\begin{enumerate}[(i)]
  \item $\phi$ and $\widetilde{\phi}$ have the same \emph{angle function}, i.e., $\nu = \langle N,\xi\rangle = \langle\widetilde{N},\widetilde\xi\rangle$, where $N$ and $\widetilde{N}$ are appropriate unit normal vector fields to $\phi$ and $\widetilde\phi$, and $\xi$ and $\widetilde\xi$ denote the unit vertical Killing fields in $\mathbb{M}^2(\epsilon)\times\mathbb{R}$ and $\mathbb{E}(4H^2+\epsilon, H)$, respectively.

  \item The tangential projections $T = \xi - \nu N$ and $\widetilde{T} = \widetilde\xi - \nu \widetilde{N}$, as well as the shape operators $S$ and $\widetilde{S}$ of $\phi$ and $\widetilde{\phi}$, respectively, satisfy $\mathrm{d}\phi^{-1}(T) = J\mathrm{d}\widetilde\phi^{-1}(\widetilde{T})$ and $S = J \widetilde{S}$, where $J$ is a $\frac{\pi}{2}$-rotation in the tangent bundle of $\Sigma$.

  \item Any geodesic curvature line in the initial surface becomes a planar line of symmetry in the conjugate one: Given a curve $\Gamma\subset\Sigma$, then $\widetilde{\phi}(\Gamma)$ is a horizontal (resp.\ vertical) geodesic if and only if $\phi(\Gamma)$ lies in a vertical plane (resp.\ horizontal slice), which the immersion meets orthogonally.
\end{enumerate}
Throughout the rest of the paper, we will omit the immersions $\widetilde\phi$ and $\phi$, and refer to the (immersed) surfaces as $\widetilde\Sigma$ in $\mathbb{E}(4H^2+\epsilon,H)$ and $\Sigma$ in $\mathbb{M}^2(\epsilon)\times\mathbb{R}$.

\section{Proof of Theorem~\ref{thm:tessellations}}\label{sec:conjugate-Plateau-construction}

We will assume that $k\geq 3$ in the sequel (if $k=2$, it suffices to consider the $H$-tori constructed in~\cite{MT}). Although the argument below can be straightforwardly adapted to the case $\epsilon=0$, we will also assume $\epsilon\in\{-1,1\}$.

\subsection{The fundamental piece}\label{sec:fundamental-piece}
Let us consider a convex triangle $\Delta$ in $\mathbb{M}^2(4H^2+\epsilon)$ with three geodesic segments $\widetilde\beta_1,\widetilde\beta_2,\widetilde\beta_3$ as sides. The triangle $\Delta$ defines a geodesic quadrilateral $\widetilde\Gamma\subset\mathbb{E}(4H^2+\epsilon,H)$ with three horizontal sides $\widetilde h_1$, $\widetilde h_2$ and $\widetilde h_3$ such that $\pi(\widetilde h_i)=\widetilde \beta_i$, $i\in\{1,2,3\}$, and a vertical segment $\widetilde v$ projecting to the intersection of $\widetilde\beta_1$ and $\widetilde\beta_3$, which can be uniquely determined by the fact $\widetilde\Gamma$ is nullhomotopic in the mean-convex body $W=\pi^{-1}(\Delta)\subset\mathbb{E}(4H^2+\epsilon,H)$. Let us denote by the number $\widetilde 1$ (resp. $\widetilde 2$, $\widetilde 3$, $\widetilde 4$) the intersection of $\widetilde h_1$ and $\widetilde v$ (resp, $\widetilde h_1$ and $\widetilde h_2$, $\widetilde h_2$ and $\widetilde h_3$, $\widetilde h_3$ and $\widetilde v$), see Figure~\ref{fig:first-polygon} (left). It follows from Meeks and Yau's solution to the Plateau problem~\cite{MY82} that there is a unique area-minimizing embedded disk $\widetilde\Sigma\subset W$ with boundary $\widetilde\Gamma$.

By means of a standard argument consisting in a slight deformation of the boundary as in the proof of~\cite[Proposition 2]{MT}, it follows that interior of $\widetilde\Sigma$ is a graph over the interior of $\Delta$. Moreover, applying the boundary maximum principle, and intersecting $\widetilde\Sigma$ with appropriate umbrellas as in the proof of~\cite[Lemma 3]{MT}, we deduce that the angle function $\nu$ takes values in $[-1,0]$. Furthermore,
\begin{itemize}
  \item the only points on which $\nu$ vanishes are those in the vertical segment $\widetilde v$;
  \item the only points on which $\nu$ takes the value $-1$ are $\widetilde 2$ and $\widetilde 3$.
\end{itemize}
The sister $H$-surface $\Sigma$ in $\mathbb{M}^2(\epsilon)\times\mathbb{R}$ is bounded by curves $h_1$, $h_2$, $h_3$, and $v$, corresponding to $\widetilde h_1$, $\widetilde h_2$, $\widetilde h_3$ and $\widetilde v$, respectively, such that $h_1$, $h_2$ and $h_3$ are contained in vertical planes whereas $v$ lies in a horizontal slice, which will be assumed to be $\mathbb{M}^2(\epsilon)\times\{0\}$ (see Figure~\ref{fig:first-polygon}). Their endpoints will be denoted by 1--4, in correspondence with $\widetilde 1$--$\widetilde 4$. We will express $h_i=(\beta_i,z_i)\in\mathbb M^2(\epsilon)\times\mathbb{R}$, $i\in\{1,2,3\}$. Since $\Sigma$ intersects the plane containing $h_i$ orthogonally, it is not difficult to show that $\|\beta_i'\|=-\nu$ and $|z_i'|=\sqrt{1-\nu^2}$. From the above discussion about $\nu$, it follows that the curves $\beta_i$ are injective, and the height components $z_i$ are monotonic. 

\begin{figure}[htbp]
\centering
\includegraphics{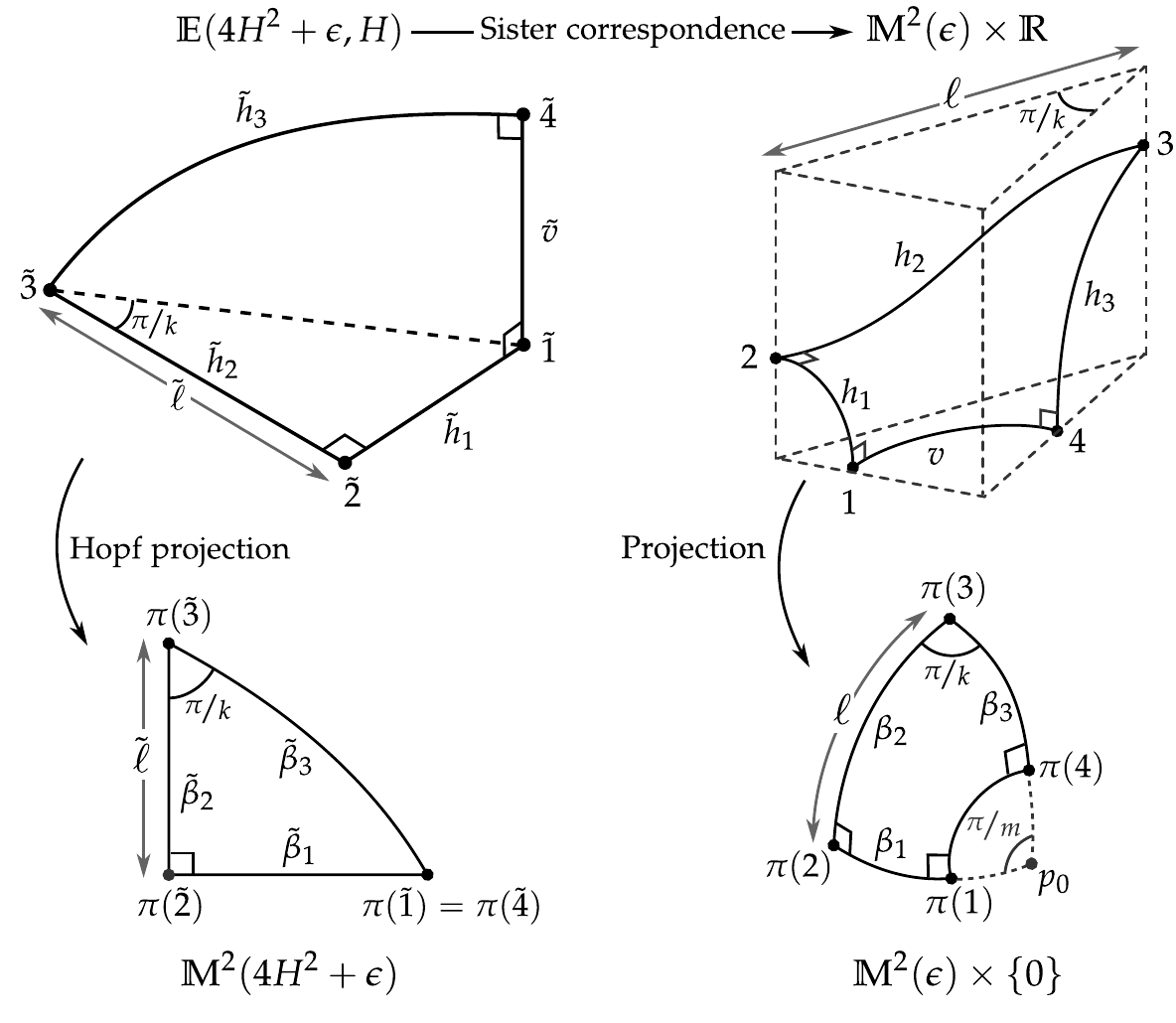}
\caption{Geodesic polygon $\widetilde\Gamma$ (top left), its sister $\Gamma$ (top right) for a suitable choice of the angles of $\Delta$ and the projection of them to $\mathbb{M}^2(4H^2+\epsilon)$ via Hopf fibration (bottom left) and $\mathbb{M}^2(\epsilon)$ (bottom right), respectively. Our target $\ell$ will be the length of the cathetus of a right triangle with angles $\pi/2$, $\pi/k$ and $\pi/m$ in $\mathbb{M}^2(\epsilon)$.}
\label{fig:first-polygon}
\end{figure}

The angles of $\Sigma$ at the vertexes $1$ and $4$ are equal to $\frac{\pi}{2}$, so we will get a complete smooth surface by means of successful axial symmetries about the geodesics of the boundary if and only if the angles at $2$ and $3$ are integer divisors of $\pi$ (due to the Schwarz reflection principle and the absence of isolated singularities, see~\cite[Sec.~2.2]{MT}). Such angles can be easily prescribed, since they are equal to those of $\widetilde\Sigma$ at $\widetilde 2$ and $\widetilde 3$ (which are in turn equal to those of $\Delta$ at $\pi(\widetilde 2)$ and $\pi(\widetilde 3)$). Furthermore, $\Sigma$ is horizontal at $2$ and $3$, and this means that the angle made by the vertical planes containing $h_1$ and $h_2$ (resp. $h_2$ and $h_3$) is the angle at $2$ (resp. 3), so we can easily understand the isometry group of the completion of $\Sigma$ provided that we control the distance between $\pi(2)$ and $\pi(3)$. To this end, we define the quantities
\begin{equation}\label{eqn:ell}
\begin{aligned}
\widetilde\ell&=\mathrm{Length}(\widetilde\beta_2)=\mathrm{Length}(\widetilde h_2),\\
\ell&=\mathrm{Length}(\beta_2)=-\int_{h_2}\nu=-\int_{\widetilde h_2}\nu.
\end{aligned}
\end{equation}
Our goal in the next subsection is to use different triangles $\Delta$ to produce $H$-surfaces $\Sigma$ such that the vertical planes containing the curves $h_i$ form a specific triangle that fits into a $(m, k)$-tessellation (see Figures~\ref{fig:4-gon-tessellation} and~\ref{fig:first-polygon}). Since two of its angles are determined, there is only one degree of freedom in the choice of $\Delta$ (namely, the value of $\widetilde\ell$), which will allow us to reach a specific value of $\ell$ via continuity arguments (observe that, because of the uniqueness of solution, the value of $\ell$ depends continuously on $\Delta$).

\subsection{Continuity arguments}\label{sec:continuity}
Let $m,k\geq 2$ be integers, let $H>0$, and let $\epsilon$ be the sign of $\tfrac{1}{k} + \tfrac{1}{m} - \tfrac{1}{2}$. We will consider the construction in Section~\ref{sec:fundamental-piece} with $\Delta\subset\mathbb{M}^2(4H^2+\epsilon)$ a triangle with angles $\frac{\pi}{2}$ at $\pi(\widetilde 2)$, and $\frac{\pi}{k}$ at $\pi(\widetilde 3)$. This produces convex triangles provided that $0<\widetilde\ell<\widetilde\ell_{\mathrm{limit}}$, where
\[
    \widetilde\ell_{\mathrm{limit}}=
    \begin{cases}
	\frac{\pi}{\sqrt{4H^2+\epsilon}},&\text{if }4H^2+\epsilon>0\text{ (supercritical)},\\
	\infty,&\text{if } 4H^2+\epsilon=0\text{ (critical)},\\
	\frac{\arctanh(\cos(\pi/k))}{\sqrt{-4H^2-\epsilon}},&\text{if } 4H^2+\epsilon<0\text{ (subcritical)}.\\
    \end{cases}
\]
This value of $\widetilde\ell_{\mathrm{limit}}$ represents the diameter of $\mathbb{S}^2(4H^2+\epsilon)$ in the supercritical case, and the cathetus of an ideal right triangle with angles $0$, $\frac{\pi}{2}$ and $\frac{\pi}{k}$ in $\mathbb{H}^2(4H^2+\epsilon)$ in the subcritical case.

Our target value of $\ell$ will be the length of the cathetus, adjacent to the angle $\frac{\pi}{k}$, of a right triangle with angles $\frac{\pi}{2}$, $\frac{\pi}{k}$ and $\frac{\pi}{m}$ in $\mathbb{M}^2(\epsilon)$ (see Figure~\ref{fig:4-gon-tessellation} and Figure~\ref{fig:first-polygon} bottom right). The explicit value of $\ell$ can be easily computed using the spherical or hyperbolic law of cosines as
\begin{equation}\label{eqn:ell-target}
\ell_{\mathrm{target}}=\begin{cases}
\arccos\left(\frac{\cos(\frac{\pi}{m})}{\sin(\frac{\pi}{k})}\right),&\text{if }\epsilon=1,\\
\arccosh\left(\frac{\cos(\frac{\pi}{m})}{\sin(\frac{\pi}{k})}\right),&\text{if }\epsilon=-1.
\end{cases}
\end{equation}
We will check that the degree of freedom given by $\widetilde\ell\in(0,\widetilde\ell_{\mathrm{limit}})$ allows us to reach the value $\ell=\ell_{\mathrm{target}}$ given by~\eqref{eqn:ell-target}. As $\widetilde\ell\to 0$, the minimal surface $\Sigma$ becomes very small, and so does the length of $\widetilde h_2$, so~\eqref{eqn:ell} implies that $\ell$ gets arbitrarily close to zero. Let us call $\ell_{\mathrm{limit}}$ the value of $\ell$ associated with $\widetilde\ell_{\mathrm{limit}}$, so the continuity of $\ell$ reduces the problem to proving that $\ell_{\mathrm{target}}<\ell_{\mathrm{limit}}$. This is achieved by studying the limit cases according to the sign of $4H^2+\epsilon$ (Cases A, B and C below).

We are done once we get the piece $\Sigma$ with $\ell=\ell_{\mathrm{target}}$, since the smooth compact surface $\Sigma_{H,m,k}$ is obtained by successive reflections of $\Sigma$ across its boundary.

\medskip
\noindent\textbf{Case A (subcritical mean curvature).} Assume that $4H^2+\epsilon<0$ in the above construction, so $\epsilon=-1$ and $\widetilde\Sigma$ lies in $\widetilde{\mathrm{Sl}}_2(\mathbb{R})$. We need to produce some barriers in order to give an appropriate bound for $\ell_{\mathrm{limit}}$.

\begin{lemma}\label{lemma:barrier1}
Let $\Gamma_1,\Gamma_2$ be two orthogonal horizontal geodesics contained in an umbrella $S_0\subset\mathbb{E}(\kappa,\tau)$, with $\kappa<0$ and $\tau\neq 0$. Then there is an entire minimal graph $S\subset\mathbb{E}(\kappa,\tau)$ such that $S\cap S_0=\Gamma_1\cup\Gamma_2$ with angle function $\nu_S\equiv -1$ along $\Gamma_2$.
\end{lemma}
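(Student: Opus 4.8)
The plan is to build the desired minimal graph $S$ explicitly as an entire graph over $\mathbb{M}^2(\kappa)$ by exploiting the symmetries forced by the two orthogonal geodesics $\Gamma_1$ and $\Gamma_2$. First I would set up coordinates adapted to the configuration: since $S_0$ is an umbrella (the union of the horizontal geodesics through a single point, i.e.\ the image of the exponential map restricted to a horizontal plane), I would place the common point $p=\Gamma_1\cap\Gamma_2$ over the origin of a disk model of $\mathbb{M}^2(\kappa)\cong\mathbb{H}^2(\kappa)$ and take $\pi(\Gamma_1),\pi(\Gamma_2)$ to be the two coordinate geodesics through the origin. Because the two projected geodesics are orthogonal and meet at the origin, they generate a reflection group; the key structural observation is that the graph I want should be invariant under the reflections fixing $\pi(\Gamma_1)$ and $\pi(\Gamma_2)$ and anti-invariant in the fiber direction, so that the graph meets $S_0$ exactly along $\Gamma_1\cup\Gamma_2$ and along nothing else.

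The heart of the argument is to produce an entire solution of the minimal surface (Killing graph) equation in $\mathbb{E}(\kappa,\tau)$ with the prescribed behavior. I would look for $S$ as the sister surface, or equivalently write the minimal graph equation for the graph function $u$ over $\mathbb{M}^2(\kappa)$ and seek a solution that vanishes along $\pi(\Gamma_1)$, that has vertical tangent plane (i.e.\ angle function $-1$) along $\Gamma_2$, and that is odd with respect to the reflection across $\pi(\Gamma_1)$. Concretely, I expect $S$ to arise as a suitable member of the associated family / a multigraph obtained by solving a Dirichlet-type or Plateau problem whose boundary data is dictated by $\Gamma_1$ and $\Gamma_2$, and then checking that the solution extends to an \emph{entire} graph by using the parabolicity/convexity of the ambient together with a barrier from horizontal slices (umbrellas) of $\mathbb{E}(\kappa,\tau)$. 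The condition $\nu_S\equiv-1$ along $\Gamma_2$ is what pins down the correct solution: it says the surface becomes vertical precisely along that geodesic, which is exactly the behavior produced by Daniel's correspondence when $\Gamma_2$ is sent to a horizontal mirror curve in the sister surface.

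The main verifications, carried out in order, are: (1) existence and uniqueness of the minimal graph with the symmetry-adapted boundary behavior, using Meeks--Yau/Plateau as in the construction of the fundamental piece or a direct Perron-type argument with umbrella barriers; (2) that the reflective symmetries across $\pi(\Gamma_1)$ and $\pi(\Gamma_2)$ extend $S$ to an entire graph and force $S\cap S_0=\Gamma_1\cup\Gamma_2$ with no spurious intersection, which follows from the maximum principle comparing $S$ with the totally geodesic umbrella $S_0$ (two distinct minimal surfaces tangent along a curve cannot touch elsewhere without coinciding); and (3) the angle condition $\nu_S\equiv-1$ along $\Gamma_2$, read off from the geometry of the tangent planes along that geodesic. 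The step I expect to be the main obstacle is (1), namely guaranteeing that the minimal graph is genuinely \emph{entire} rather than merely defined on a half-plane or blowing up before covering all of $\mathbb{M}^2(\kappa)$; this is where the hypotheses $\kappa<0$ and $\tau\neq0$ are essential, since the negative curvature provides the room (and the ideal boundary structure) needed for the graph to close up as an entire barrier, exactly as one needs $\ell_{\mathrm{limit}}$ to be controlled in Case~A of the continuity argument.
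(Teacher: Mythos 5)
Your proposal misses the paper's key idea, and two of your three verification steps would fail as stated. The paper does not solve any Dirichlet or Plateau problem: it defines $S$ explicitly as the ruled surface $S=\bigcup_{t\in\mathbb{R}}\phi_t(\Gamma_1)$, where $\{\phi_t\}$ is the one-parameter group of hyperbolic translations of $\mathbb{E}(\kappa,\tau)$ along $\Gamma_2$ (this is where $\kappa<0$ actually enters, not through ``room at the ideal boundary''). Minimality is immediate because $S$ is invariant under the axial symmetries about its horizontal rulings $\phi_t(\Gamma_1)$, and the invariance under $\phi_t$ is precisely the mechanism that propagates the value $\nu_S=-1$ from the point $\Gamma_1\cap\Gamma_2$ (where the tangent plane of $S$ contains two orthogonal horizontal directions, hence is horizontal --- note that $\nu=-1$ means \emph{horizontal} tangent plane and vertical normal, not ``vertical tangent plane'' as you wrote) along all of $\Gamma_2$. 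Without an explicit invariant construction, your step (3) has no content: $\nu_S\equiv-1$ along a curve is not boundary data one can impose in a Perron or Meeks--Yau scheme, and you give no way to verify it for whatever solution such a method produces.

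The second genuine gap is your step (2). The maximum principle does not say that ``two distinct minimal surfaces tangent along a curve cannot touch elsewhere without coinciding''; distinct minimal surfaces can perfectly well intersect in several curves (a catenoid and a vertical plane through its axis meet in two curves), and along $\Gamma_1\cup\Gamma_2$ the surfaces $S$ and $S_0$ meet transversally except at the crossing point, so there is no one-sided tangential contact to which the maximum principle could be applied (also, $S_0$ is not totally geodesic when $\tau\neq0$, only minimal). The paper's argument for $S\cap S_0=\Gamma_1\cup\Gamma_2$ is of a completely different nature, and it is the place where $\tau\neq 0$ is really used: a spurious intersection point $p$ would lie both on a horizontal geodesic $\Gamma_3\subset S_0$ through the umbrella's center and on a ruling $\Gamma_4=\phi_t(\Gamma_1)\subset S$, and then $\Gamma_2\cup\Gamma_3\cup\Gamma_4$ would contain a closed horizontal geodesic triangle, which cannot exist in a Killing submersion with non-zero bundle curvature (a horizontal lift of a closed curve enclosing non-zero area has non-trivial vertical holonomy). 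So both hypotheses $\kappa<0$ and $\tau\neq0$ play roles quite different from the ones you assign them, and the two substantive claims of the lemma are not reachable by the route you outline.
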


\begin{proof}
Define the entire graph $S=\cup_{t\in\mathbb{R}}\phi_t(\Gamma_1)$, being $\{\phi_t:t\in\mathbb{R}\}$ the 1-parameter group of hyperbolic translations in $\mathbb{E}(\kappa,\tau)$ along $\Gamma_2$. Observe that $S$ is minimal since it is ruled by the horizontal geodesics $\phi_t(\Gamma_1)$ and invariant under axial symmetries about them. It is clear that $\Gamma_1\cup\Gamma_2\subset S\cap S_0$, and the other inclusion follows from the fact that if there existed $p\in(S\cap S_0)-(\Gamma_1\cup\Gamma_2)$, then there would be horizontal geodesics $\Gamma_3\subset S_0$ and $\Gamma_4\subset S$, both of them containing $p$, and $\Gamma_2\cup\Gamma_3\cup\Gamma_4$ would contain a closed horizontal geodesic triangle, in contradiction with the assumption $\tau\neq 0$. Finally, since $\Gamma_1\cup\Gamma_2\subset S\cap S_0$, we get $\nu_S=-1$ at $\Gamma_1\cap\Gamma_2$. This implies that $\nu_S\equiv -1$ along $\Gamma_2$ since $\phi_t$ are isometries preserving the angle function.
\end{proof}

\begin{lemma}\label{lemma:barrier2}
If $4H^2+\epsilon<0$, then 
\[\ell_{\mathrm{limit}}>\arccosh\left(\frac{1-4H^2\sin(\frac{\pi}{k})}{(1-4H^2)\sin(\frac{\pi}{k})}\right)>\ell_{\mathrm{target}}.\]
\end{lemma}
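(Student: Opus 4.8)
The statement is a chain of two inequalities, and I would treat them separately. The right-hand one, $\arccosh\bigl(\tfrac{1-4H^2\sin(\pi/k)}{(1-4H^2)\sin(\pi/k)}\bigr)>\ell_{\mathrm{target}}$, is elementary; the left-hand one, $\ell_{\mathrm{limit}}>\arccosh\bigl(\tfrac{1-4H^2\sin(\pi/k)}{(1-4H^2)\sin(\pi/k)}\bigr)$, is the substantive claim and is where Lemma~\ref{lemma:barrier1} enters. For the elementary inequality, write $s=\sin(\tfrac\pi k)$, $c=\cos(\tfrac\pi m)$ and $x=4H^2$, and recall that in the subcritical regime $\epsilon=-1$ forces $s<c$ and $0<x<1$. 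Both arguments of $\arccosh$ are $\ge 1$ (the right one because $c>s$, the left one because $1\ge s$ gives $1-xs\ge(1-x)s$), so by strict monotonicity of $\arccosh$ on $[1,\infty)$ it suffices to compare arguments. Since $\ell_{\mathrm{target}}=\arccosh(c/s)$ by~\eqref{eqn:ell-target}, the claim reduces to $\tfrac{1-xs}{(1-x)s}>\tfrac cs$, i.e. $1-xs>(1-x)c$, i.e. $1-c>x(s-c)$: the right-hand side is negative because $s<c$ and $x>0$, while the left-hand side is positive because $c<1$. This holds for every admissible $H$; note in passing that the constraint $4H^2<\alpha(m,k)$ is automatic here, since $\alpha(m,k)=\tfrac{s+c}{c-s}>1>4H^2$ in the subcritical regime.

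For the main inequality I would take the degenerate limit $\widetilde\ell\to\widetilde\ell_{\mathrm{limit}}$, in which $\Delta$ opens up to the ideal right triangle in $\mathbb{M}^2(4H^2-1)$ with angles $\tfrac\pi2,\tfrac\pi k,0$, the ideal vertex being $\pi(\widetilde1)=\pi(\widetilde4)$ and the vertical segment $\widetilde v$ escaping to infinity. At the right-angle vertex $\widetilde 2=\widetilde h_1\cap\widetilde h_2$ the two boundary geodesics $\widetilde h_1,\widetilde h_2$ are \emph{orthogonal} horizontal geodesics lying in the umbrella $S_0$ centred at $\widetilde 2$, so Lemma~\ref{lemma:barrier1} applies with $\Gamma_1=\widetilde h_1$ and $\Gamma_2=\widetilde h_2$, producing an entire minimal graph $S$ with $S\cap S_0=\widetilde h_1\cup\widetilde h_2$ and $\nu_S\equiv-1$ along $\widetilde h_2$. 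The plan is to use $S$ as a barrier: since $\widetilde\Sigma$ already lies under $S_0$ (because $\nu\le0$) and meets $S$ along $\widetilde h_1\cup\widetilde h_2$ with mutual tangency at $\widetilde 2$, the maximum principle should trap $\widetilde\Sigma$ in the region cut out by $S$ and $S_0$ throughout the limit.

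Transporting this confinement to the sister side $\Sigma\subset\mathbb{H}^2\times\mathbb{R}$, where $\beta_2$ is a genuine geodesic segment of length $\ell$ joining the two horizontal points $\pi(2)$ and $\pi(3)$, the barrier forces these endpoints a definite distance apart, and computing that distance amounts to reading off the width of the sister $\widehat S$ of the barrier: an $H$-surface invariant under the hyperbolic translations along the geodesic $g=\pi(\text{sister of }\Gamma_2)$ and containing $g\times\{0\}$. Setting $a=\sqrt{1-x}\,\widetilde\ell_{\mathrm{limit}}=\arctanh(\cos\tfrac\pi k)$, so that $\tanh a=\cos(\tfrac\pi k)$ and hence $\cosh a=1/s$, I expect the explicit integration to yield $\cosh\ell_{\mathrm{limit}}>\tfrac{\cosh a-x}{1-x}=\tfrac{1-xs}{(1-x)s}$, which is exactly the asserted bound; the strict inequality should follow from the strong maximum principle, as equality would force the trapped minimal disk $\widetilde\Sigma$ to coincide with the ruled barrier $S$.

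The main obstacle is making this confinement quantitative, and two points need care. First, $S$ and $\widetilde\Sigma$ are \emph{not} tangent along the interior of $\widetilde h_2$ (there $\nu_S=-1$ while $\nu_{\widetilde\Sigma}>-1$ by the analysis in Section~\ref{sec:fundamental-piece}), so the comparison must be organised at the tangency vertex $\widetilde2$ and propagated, and one must control the behaviour as the ideal vertex and $\widetilde v$ recede to infinity, where compactness of the Plateau solutions is lost. Second, converting the confinement of $\widetilde\Sigma$ in $\mathbb{E}(4H^2-1,H)$ into a sharp lower bound for the base length $\ell=-\int_{\widetilde h_2}\nu$ requires the explicit geometry of the translational barrier $\widehat S$ and the identification of its width with $\arccosh\bigl(\tfrac{1-4H^2 s}{(1-4H^2)s}\bigr)$; the flux-like structure $\tfrac{\cosh a-x}{1-x}$ of this quantity is a strong hint that the computation balances the ideal length $\widetilde\ell_{\mathrm{limit}}$ against the conformal factor $1-4H^2$ relating the two ambient base curvatures, and carrying it out rigorously is the crux on which the whole estimate rests.
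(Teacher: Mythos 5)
Your treatment of the second (elementary) inequality is correct and coincides with the paper's: writing $s=\sin(\frac{\pi}{k})$, $c=\cos(\frac{\pi}{m})$, $x=4H^2$, both arguments reduce the claim to $1-c+x(c-s)>0$, which holds since $c<1$, $s<c$ (from $\frac{\pi}{k}+\frac{\pi}{m}<\frac{\pi}{2}$) and $0<x<1$. The gap lies entirely in the first inequality, and it is twofold. First, the step ``transporting this confinement to the sister side'' is not a legitimate operation: the sister correspondence preserves the intrinsic metric and the angle function of a single simply connected surface, but it does \emph{not} preserve relative positions of distinct surfaces. From $\widetilde\Sigma$ lying above $S$ in $\mathbb{E}(4H^2-1,H)$ you cannot conclude that $\Sigma$ lies on any particular side of the sister $\widehat S$ in $\mathbb{H}^2\times\mathbb{R}$, nor read off a distance bound from the ``width'' of $\widehat S$. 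This is exactly why the paper performs \emph{all} comparisons in the initial space, along the horizontal geodesic $\widetilde h_2$ shared by the barriers and $\widetilde\Sigma$, and converts them into a length bound only through the conjugation-invariant identity $\ell=-\int_{\widetilde h_2}\nu$.

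Second, and decisively, the barrier you chose cannot produce the inequality you want even inside the initial space. Along $\widetilde h_2$ one has $\nu_S\equiv-1$, so the comparison $\nu_S<\nu_{\widetilde\Sigma}$ gives $-\nu_{\widetilde\Sigma}<1$ and hence only the \emph{upper} bound $\ell<\widetilde\ell$; neither $S$ nor its sister carries any lower bound for $\ell$. In the paper the lower bound comes from a second barrier that your proposal never invokes: the umbrella $T$ centered at $\widetilde 3$. Trapping $\widetilde\Sigma$ between $S$ and $T$ (established by the closed-horizontal-geodesic-triangle argument using $\tau\neq 0$, not by a maximum-principle propagation from the tangency at $\widetilde 2$) yields $-1=\nu_S<\nu_{\widetilde\Sigma}<\nu_T$ along $\widetilde h_2$, whence $\ell_{\mathrm{limit}}>-\int_0^{\widetilde\ell_{\mathrm{limit}}}\nu_T(\gamma(t))\,\mathrm{d}t$ for $\gamma$ a unit-speed horizontal geodesic issuing from $\widetilde 3$ along $\widetilde h_2$; the $\arccosh$ expression in the statement is precisely the closed-form value of this integral of the umbrella's angle function. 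Your trapping of $\widetilde\Sigma$ between $S$ and the umbrella $S_0$ at $\widetilde 2$ could be repaired into the same scheme (by homogeneity the integral of $-\nu_{S_0}$ along $\widetilde h_2$ coincides with the paper's), but then the quantitative crux is the umbrella integral in $\widetilde{\mathrm{Sl}}_2(\mathbb{R})$, not the geometry of $\widehat S$ --- and that integration, which you yourself flag as the crux, is never carried out. As written, the heart of the estimate is both mislocated and missing.
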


\begin{proof}
Let us consider the umbrella $T$ centered at $\widetilde 3$, and let $S$ be the surface given by Lemma~\ref{lemma:barrier1} when we choose the horizontal geodesics $\Gamma_1$ containing $\widetilde h_1$ and $\Gamma_2$ containing $\widetilde h_2$. Observe that $\widetilde\Sigma$ lies in a region bounded by $T$ and $S$:
\begin{itemize}
  \item $\Sigma$ lies below $T$: otherwise, the umbrella $T$ would contain some $p\in\widetilde h_1$, and hence also the horizontal segment joining $\widetilde 3$ and $p$, which produces a closed horizontal geodesic triangle with vertexes $\widetilde 2$, $\widetilde 3$, and $p$.
  \item $\Sigma$ lies above $S$: otherwise, $S$ would contain some $p\in\widetilde h_3$, and hence also a horizontal segment joining some $q\in\widetilde h_2$ and $p$, which produces a closed horizontal geodesic triangle with vertexes $\widetilde 3$, $p$ and $q$.
\end{itemize}
We deduce that the angle function of $\Sigma$ along $h_2=S\cap T\cap\Sigma$ can be compared with those of $S$ and $T$, yielding $-1=\nu_S<\nu_\Sigma<\nu_T$. Let $\gamma$ be a unit-speed horizontal geodesic with $\gamma(0)=\widetilde 3$, so a direct computation yields
\begin{align*}
\ell_{\mathrm{limit}}&>-\int_0^{\widetilde\ell_{\mathrm{limit}}}\nu_T(\gamma(t))\mathrm{d}t\\
&=\int_0^{\frac{\arctan(\cos(\pi/k))}{\sqrt{1-4H^2}}}\frac{\sqrt{2-8 H^2} \cosh \left(\frac{1}{2} \sqrt{1-4 H^2} t\right)}{\sqrt{\cosh \left(\sqrt{1-4 H^2} t\right)-8 H^2+1}}\mathrm{d}t.
\end{align*}
Evaluating this integral, we get the first inequality in the statement. As for the second one, in view of~\eqref{eqn:ell-target}, it is equivalent to proving the inequality 
\[1-\cos(\tfrac{\pi}{m})+4H^2(\cos(\tfrac{\pi}{m})-\sin(\tfrac{\pi}{k}))>0,\]
which follows from estimating separately $1-\cos(\tfrac{\pi}{m})>0$ and $\cos(\tfrac{\pi}{m})>\sin(\tfrac{\pi}{k})$ (in the latter we employ the fact that $\frac{\pi}{m}+\frac{\pi}{k}<\frac{\pi}{2}$).
\end{proof}

By the arguments at the beginning of Section~\ref{sec:continuity}, Lemma~\ref{lemma:barrier2} implies Theorem~\ref{thm:tessellations} in the subcritical case for any $H\in(0,\frac{1}{2})$. Let us analyze the limit cases. On the one hand, if $4H^2\to 0$, then the bundle curvature of the initial space $\mathbb{E}(4H^2+\epsilon,4H)$ tends to zero, so the length of $\widetilde v$ also tends to zero. The maximum principle implies that the minimal piece $\widetilde\Sigma$ lies in a smaller and smaller slab of $\mathbb{E}(4H^2+\epsilon,4H)$, so its angle function converges uniformly to $-1$ (this follows from a standard limit argument using the stability of $\Sigma$), so the angle function of the sister piece $\Sigma$ also converges uniformly to $-1$ and it becomes a slice (with a singularity at one of the vertexes) in the limit. On the other hand, since $\alpha(m,k)>\frac{1}{2}$ whenever $\epsilon=-1$, the limit $4H^2\to\alpha(m,k)$ does not occur in the subcritical case and will be analyzed in Case C below. 

\medskip
\noindent\textbf{Case B (critical mean curvature).} If $4H^2+\epsilon=0$, then $\epsilon=-1$ and $\widetilde\Sigma$ lies in $\mathrm{Nil}_3(\frac{1}{2})$. In~\cite[Lemma~5]{MT}, the authors proved that $\ell_{\mathrm{limit}}=\infty$ using a similar argument to that of Case A, i.e., an umbrella and a surface invariant under translations (which plays the role of the barrier given by Lemma~\ref{lemma:barrier1}) as barriers. Therefore Theorem~\ref{thm:tessellations} also holds true in this case (note that the limit cases $4H^2\to0$ and $4H^2\to\alpha(m,k)$ do not occur in the critical case).

\medskip
\noindent\textbf{Case C (supercritical mean curvature).} If $4H^2+\epsilon>0$, then the initial piece $\widetilde\Sigma$ lies in the Berger sphere $\mathbb{E}(4H^2+\epsilon,H)$. The following result reveals that $\ell_{\mathrm{limit}}$ can be computed explicitly, so we do not have to find any barriers.

\begin{lemma}\label{lemma:supercritical}
If $4H^2+\epsilon>0$, then
\[\ell_\mathrm{limit}=\begin{cases}
 2\arctan(\frac{1}{2H}),& \text{if }\epsilon=1,\\
 2\arctanh(\frac{1}{2H}),& \text{if }\epsilon=-1.\\
\end{cases}
\]
In particular, $\ell_\mathrm{target}<\ell_\mathrm{limit}$ if and only if $4H^2<\alpha(m,k)$ (see Theorem~\ref{thm:tessellations}).
\end{lemma}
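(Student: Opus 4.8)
The plan is to recognize the limit $\widetilde\ell\to\widetilde\ell_{\mathrm{limit}}$ as a degeneration in which the fundamental piece becomes a sector of a rotationally invariant $H$-sphere, and then to compute the radius of that sphere explicitly. In the supercritical case the base of the Killing submersion is the round sphere $\mathbb{M}^2(4H^2+\epsilon)=\mathbb{S}^2(4H^2+\epsilon)$, whose diameter is exactly $\widetilde\ell_{\mathrm{limit}}=\pi/\sqrt{4H^2+\epsilon}$; hence as $\widetilde\ell\to\widetilde\ell_{\mathrm{limit}}$ the vertexes $\pi(\widetilde 2)$ and $\pi(\widetilde 3)$ of $\Delta$ tend to antipodal points and $\widetilde\beta_2$ tends to a great semicircle. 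I would first argue that, under this degeneration, the area-minimizing pieces $\widetilde\Sigma$ converge to a fundamental domain of the rotationally invariant minimal sphere in $\mathbb{E}(4H^2+\epsilon,H)$ whose axis is the fiber over $\pi(\widetilde 3)$; equivalently, by the Daniel correspondence, the sister pieces $\Sigma$ converge to a $\tfrac{\pi}{k}$-lune of the upper half of the rotationally invariant $H$-sphere $\mathcal S_H\subset\mathbb{M}^2(\epsilon)\times\mathbb{R}$ centered over $\pi(3)$. In this limit the boundary curve $\beta_2$ becomes a meridian of $\mathcal S_H$ joining its pole (the vertex $3$, where $\nu=-1$) to its equator (the level $\mathbb{M}^2(\epsilon)\times\{0\}$, where $\nu=0$), so that $\ell_{\mathrm{limit}}=\mathrm{Length}(\beta_2)$ is precisely the geodesic radius $\rho_0$ of the disk over which $\mathcal S_H$ is a bigraph.

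The second step computes $\rho_0$. Writing the upper half of $\mathcal S_H$ as a rotational graph $z=u(\rho)$ in geodesic polar coordinates on $\mathbb{M}^2(\epsilon)$, and denoting $\sigma(\rho)=\sin\rho$ when $\epsilon=1$ and $\sigma(\rho)=\sinh\rho$ when $\epsilon=-1$, the constant mean curvature equation $2H=\sigma^{-1}\bigl(\sigma\, u'/\sqrt{1+u'^2}\bigr)'$ admits the first integral
\[
\sigma(\rho)\,\frac{u'(\rho)}{\sqrt{1+u'(\rho)^2}}=2H\int_0^\rho \sigma(t)\,\mathrm{d}t,
\]
where the constant vanishes because $u'(0)=0$ and $\sigma(0)=0$. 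At the equator $\rho=\rho_0$ the bigraph is vertical, i.e.\ $u'(\rho_0)=\pm\infty$, which yields $\sigma(\rho_0)=2H\int_0^{\rho_0}\sigma$. Using the half-angle identities this simplifies to $\cot(\rho_0/2)=2H$ when $\epsilon=1$ and to $\coth(\rho_0/2)=2H$ when $\epsilon=-1$, that is, $\rho_0=2\arctan(\tfrac{1}{2H})$ and $\rho_0=2\arctanh(\tfrac{1}{2H})$ respectively, which is the claimed value of $\ell_{\mathrm{limit}}$.

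For the final assertion I would just compare $\ell_{\mathrm{target}}$ with $\ell_{\mathrm{limit}}$ using the monotonicity of $\cos$ (resp.\ $\cosh$). When $\epsilon=1$, from $\cos(2\arctan(\tfrac{1}{2H}))=\frac{4H^2-1}{4H^2+1}$ and~\eqref{eqn:ell-target}, the inequality $\ell_{\mathrm{target}}<\ell_{\mathrm{limit}}$ is equivalent to $\frac{\cos(\pi/m)}{\sin(\pi/k)}>\frac{4H^2-1}{4H^2+1}$; cross-multiplying (note $\sin(\pi/k)>\cos(\pi/m)\geq 0$ since $\tfrac{\pi}{m}+\tfrac{\pi}{k}>\tfrac{\pi}{2}$) gives exactly $4H^2<\frac{\sin(\pi/k)+\cos(\pi/m)}{\sin(\pi/k)-\cos(\pi/m)}=\alpha(m,k)$. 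When $\epsilon=-1$ the same computation with $\cosh(2\arctanh(\tfrac{1}{2H}))=\frac{4H^2+1}{4H^2-1}$ and $\cos(\pi/m)>\sin(\pi/k)$ (now $\tfrac{\pi}{m}+\tfrac{\pi}{k}<\tfrac{\pi}{2}$) leads to $4H^2<\frac{\sin(\pi/k)+\cos(\pi/m)}{\cos(\pi/m)-\sin(\pi/k)}=\alpha(m,k)$.

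The main obstacle is the first step: rigorously identifying the degenerate limit with the rotational $H$-sphere and, in particular, showing that $\ell=-\int_{\widetilde h_2}\nu$ converges to $\rho_0$ as $\widetilde\ell\to\widetilde\ell_{\mathrm{limit}}$. The delicate point is that the angle function at the vertex $2$ equals $-1$ for every $\widetilde\ell<\widetilde\ell_{\mathrm{limit}}$ whereas the limit meridian reaches the equator, where $\nu=0$, so the convergence is singular at that endpoint. I would control this through the uniqueness of the area-minimizing disk (Meeks and Yau~\cite{MY82}) together with the continuous dependence of $\ell$ on $\Delta$ noted in Section~\ref{sec:fundamental-piece}, checking that the pieces stay strictly below the equator (so that $\ell<\rho_0$) for $\widetilde\ell<\widetilde\ell_{\mathrm{limit}}$ and sweep out the full lune only in the limit. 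This is the ``the barrier becomes an equality'' phenomenon from the Euclidean and spherical literature, available here precisely because the limit configuration is explicit, which is why no barriers are needed in this case.
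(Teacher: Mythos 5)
Your proposal is correct and takes essentially the same route as the paper: there, too, the limit $\widetilde\ell\to\widetilde\ell_{\mathrm{limit}}$ is identified with $\tfrac{1}{4k}$ of a horizontal umbrella (umbrellas in the Berger sphere are minimal spheres), whose sister is a $\tfrac{\pi}{k}$-sector of the upper half of the rotational $H$-sphere $S_H$, so that $\ell_{\mathrm{limit}}$ is the radius of the disk over which $S_H$ is a bigraph. The only differences are minor: the paper computes that radius by solving $h(r)=0$ for the explicit height function of $S_H$ quoted from~\cite{Manzano}, whereas you derive it from a first integral of the rotational CMC equation, and the paper declares the final equivalence with $4H^2<\alpha(m,k)$ ``straightforward,'' which your algebra confirms.
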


\begin{proof}
If $\widetilde\ell\to\widetilde\ell_\mathrm{limit}$, which is nothing but the diameter of $\mathbb{E}(4H^2+\epsilon,H)$, then $\widetilde h_1$ eventually disappears, and $\widetilde\Sigma$ becomes $1/4k$ of a horizontal umbrella centered at $\widetilde 3$. Since umbrellas in $\mathbb{E}(4H^2+\epsilon,H)$ are minimal spheres,  it follows that $\Sigma$ is a sector of angle $\frac{\pi}{k}$ of the upper half of an $H$-sphere $S_H\subset\mathbb{M}^2(\epsilon)\times\mathbb{R}$. Hence $\ell$ is the radius of the spherical or hyperbolic circle over which the $S_H$ is a bigraph. The upper hemisphere of $S_H$ is the rotationally invariant surface given by
\[h(r)=\begin{cases}
\frac{4H}{\sqrt{4H^2+1}}\arccosh\left(\frac{\sqrt{4H^2+1}}{2H}\cos\left(\frac{r}{2}\right)\right),&\text{if }\epsilon=1,\\
\frac{4H}{\sqrt{4H^2-1}}\arccos\left(\frac{\sqrt{4H^2-1}}{2H}\cosh\left(\frac{r}{2}\right)\right),&\text{if }\epsilon=-1.
\end{cases}\]
where $h(r)$ denotes the height of $S_H$ at distance $r$ from a fixed point in $\mathbb{M}^2(\epsilon)$, see~\cite{Manzano}. Hence $\widetilde\ell_\mathrm{limit}$ is the value of $r$ such that $h(r)=0$, which is easily seen to coincide with the value given in the statement.

The last assertion in the statement is straightforward in view of~\eqref{eqn:ell-target}.
\end{proof}

Observe that Lemma~\ref{lemma:supercritical} proves Theorem~\ref{thm:tessellations} in the supercritical case. The limit of the construction when $4H^2\to\alpha(m,k)$, after reflection across the boundary, has been proved to be a stack of $H$-spheres whose centers are the vertexes of the $(m,k)$-tessellation of $\mathbb{M}^2(\epsilon)\times\mathbb{R}$ at height zero, and whose radii coincide with $\ell_\mathrm{limit}=\ell_\mathrm{target}$. If $\epsilon=1$, then the limit case $4H^2\to0$ also occurs in the subcritical case, but its discussion is analogous to that of case A.

\section{Proof of Theorem~\ref{thm:embeddedness}}

Let $0<H<\frac{1}{2}$ and $g\geq 2$, and consider the compact surface $\Sigma_{H,2,g+1}$ obtained by the conjugate technique in Section~\ref{sec:conjugate-Plateau-construction}. We have also shown that the limit when $H\to\alpha(2,g+1)=\frac{1}{2}$ is a stack of spheres, which is nothing but a pair of spheres tangent along a equator since $m=2$, and the limit when $H\to 0$ is a double cover of $\mathbb{S}^2\times\{0\}$ with singularities at the centers of the $g+1$ polygons, i.e., along $g+1$ points evenly distributed along an equator. 

Since $4(g+1)$ triangles congruent to $\Omega$ are needed to tessellate $\mathbb{S}^2$, and this only gives half of $\Sigma_{H,2,g+1}$, we deduce that we need $8(g+1)$ copies of $\Sigma$ to produce $\Sigma_{H,2,g+1}$. Gau\ss-Bonnet theorem ensures that the total curvature of each of the $8(g+1)$ pieces is
\[
  \int_\Sigma K = \frac{\pi}{g+1} - \frac{\pi}{2},
\]
so we easily deduce that the genus of $\Sigma_{H,2,g+1}$ is $g$. 

Observe that the symmetry group of the $(2,g+1)$-tessellation contains the antipodal map of $\mathbb{S}^2$ if and only if $g$ is odd. If $g=2n-1$, then it is very easy to find M\"obius strips in $\widehat\Sigma_{H,2,2n}$, the quotient surface in $\mathbb{R}\mathrm{P}^2\times\mathbb{R}$, so it is non-orientable. However, the unit normal to $\Sigma_{H,2,2n}$ induces a globally defined unit normal to $\widehat\Sigma_{H,2,2n}$, ane hence $\widehat\Sigma_{H,2,2n}$ is two-sided. Finally, since $4(g-1)=8n$ copies are needed to produce $\widehat\Sigma_{H,2,2n}$, its Euler characteristic is given by
\[2\pi\chi(\widehat\Sigma_{H,2,2n})=\int_{\widehat\Sigma_{H,2,2n}}K=8n\int_\Sigma K=8n\left(\frac{\pi}{2n}-\frac{\pi}{2}\right)=2\pi(2-2n),\]
so the non-orientable genus of $\widehat\Sigma_{H,2,2n}$ is nothing but $2-\chi(\hat\Sigma_{H,2,g+1})=2n$.

Therefore the only remaining part of the proof consists in showing that $\Sigma_{H,2,g+1}$ is embedded. It suffices to show that the associated fundamental piece $\Sigma$ is embedded and lies in $\Omega\times\mathbb{R}$, being $\Omega$ the triangle with sides $\pi/{2}$, $\pi/{m}={\pi}/{2}$ and $\pi/{k}=\pi/{(g+1)}$ at the vertexes $\pi(2)$, $\pi(3)$ and $p_0$, see Figure~\ref{fig:first-polygon}. Lemma~\ref{lemma:embeddedness2} will show that $\Sigma$ projects one-to-one into a subset of $\Omega$ (i.e., $\Sigma$ is a graph) and this will finish the proof.

\begin{lemma}\label{lemma:embeddedness1}
The spherical curve $\pi\circ v$ is strictly convex in $\mathbb{S}^2$ with respect to the outer conormal to $\pi(\Sigma)$ along its boundary.
\end{lemma}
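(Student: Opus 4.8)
The plan is to reduce the statement to the sign of one component of the second fundamental form of $\Sigma$ and then to pin that sign down by comparison with a rotationally invariant $H$-sphere. First I recall that along $v$ the angle function vanishes, so $\xi$ is tangent to $\Sigma$ and the unit normal $N$ is horizontal; since $v$ lies in the totally geodesic slice $\mathbb{S}^2\times\{0\}$, the vector $N$ is, up to sign, the conormal of $\pi\circ v$ in $\mathbb{S}^2$, and the geodesic curvature of $\pi\circ v$ in $\mathbb{S}^2$ equals $\pm\mathrm{II}_\Sigma(v',v')$, where $v'$ is the unit tangent. With the normal normalised so that $\nu\le 0$ (so that $N$ is the inner conormal to $\pi(\Sigma)$ and the mean curvature is $+H$), strict convexity with respect to the outer conormal is precisely the inequality $\mathrm{II}_\Sigma(v',v')<0$.

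Next I would compute $\mathrm{II}_\Sigma(v',v')$ in the orthonormal tangent frame $\{v',\xi\}$ available along $v$. In the product $\mathbb{M}^2(\epsilon)\times\mathbb{R}$ the vertical field $\xi$ is parallel, so the angle function satisfies $\nabla\nu=-S(T)$; since $\nu=0$ forces $T=\xi$ along $v$, we obtain $S\xi=-\nabla\nu$, whence $\mathrm{II}_\Sigma(v',\xi)=-v'(\nu)=0$ and $\mathrm{II}_\Sigma(\xi,\xi)=-\xi(\nu)$. Combining with $\mathrm{tr}\,S=2H$ yields the clean identity
\[\mathrm{II}_\Sigma(v',v')=2H+\xi(\nu)=2H-\mathrm{II}_\Sigma(\xi,\xi).\]
Thus the lemma is equivalent to the strict inequality $\mathrm{II}_\Sigma(\xi,\xi)>2H$ (equivalently $\xi(\nu)<-2H$) at every point of $v$. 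The rotationally invariant $H$-sphere, whose $\nu=0$ circle is a geodesic circle of radius $R=2\arctan(\tfrac{1}{2H})$, provides the calibration: there $\mathrm{II}(\xi,\xi)=2H-\cot R$, which exceeds $2H$ exactly when $R>\frac{\pi}{2}$, i.e. when $H<\tfrac12$, and degenerates to the great-circle case $\mathrm{II}=2H$ at $H=\tfrac12$. This is the analytic origin of the threshold $H<\tfrac12$ and of why $\pi\circ v$ becomes only borderline convex in the limit (a).

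To establish $\mathrm{II}_\Sigma(\xi,\xi)>2H$ for the fundamental piece itself, I would argue by comparison, adapting the technique of~\cite{Manzano}. At a point $q\in v$ I place the rotationally invariant $H$-sphere $S_H$ tangent to $\Sigma$ at $q$ and sharing the vertical tangent plane there. Since $\Sigma$ and $S_H$ are both $H$-surfaces with the same orientation, the maximum principle forbids an interior one-sided contact unless they coincide; using that the fundamental piece is built with $\ell=\ell_{\mathrm{target}}<\ell_{\mathrm{limit}}$ (equivalently $4H^2<\alpha(2,g+1)=1$, see Lemma~\ref{lemma:supercritical}), $\Sigma$ must lie strictly on the convex side of $S_H$ near $q$, transferring the strict inequality $\mathrm{II}_{S_H}(\xi,\xi)=2H-\cot R>2H$ to $\Sigma$. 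An equivalent route is to run the comparison upstairs: the identity above reads $\mathrm{II}_\Sigma(v',v')=\widetilde n(\nu)+2H$ on the sister surface, and the normal derivative $\widetilde n(\nu)$ of the angle function along the vertical geodesic $\widetilde v$ can be bounded by the umbrella barriers of Case~C, the strictness again coming from $\ell_{\mathrm{target}}<\ell_{\mathrm{limit}}$.

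The main obstacle is precisely this last, quantitative step: the Hopf boundary lemma only gives $\xi(\nu)<0$, whereas I need the sharp bound $\xi(\nu)<-2H$, and I need it uniformly along $v$. Since $v$ is not a round circle, the barrier sphere $S_H$ is tangent to $\Sigma$ only at an isolated point, so I must verify that the contact is genuinely one-sided (not tangential to higher order) and that the strict inequality $H<\tfrac12$ propagates to a strict pointwise gap at every $q\in v$. Keeping track of the sign conventions in $\nabla\nu=-S(T)$ and of the identification of $N$ with the inner conormal is where the bookkeeping is most delicate.
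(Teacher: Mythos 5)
Your reduction is sound and matches the paper's conventions: along $v$ the identity $\nabla\nu=-S(T)$ shows that $v'$ and $\xi$ are principal directions, the geodesic curvature of $\pi\circ v$ with respect to the outer conormal equals $-\mathrm{II}_\Sigma(v',v')=-2H-\xi(\nu)$, and the sphere value $\mathrm{II}_{S_H}(\xi,\xi)=2H-\cot R=H+\frac{1}{4H}$ exceeds $2H$ precisely when $H<\frac12$. The gap is the quantitative step you flag, and it is not mere bookkeeping: the tangent-sphere comparison is invalid as stated. If $S_H$ is placed tangent to $\Sigma$ at an interior point $q\in v$ with the same unit normal, then $D=\mathrm{II}_\Sigma-\mathrm{II}_{S_H}$ is trace-free at $q$ (both surfaces have mean curvature $H$ there), so either $D=0$ or $D$ is indefinite; in the latter case the two surfaces \emph{cross} along curves through $q$ and neither lies locally on one side of the other. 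The maximum principle (interior or boundary version) produces a contradiction only from one-sided contact, which tangency alone does not create, so no sign of $D(\xi,\xi)$ can be extracted this way. Worse, since $D(\xi,\xi)=-D(v',v')$, asserting $D(\xi,\xi)\geq 0$ is literally equivalent to the sharp form of the convexity you are trying to prove, so the comparison is circular. The ``upstairs'' variant suffers from the same defect: the barriers of the paper (Case A) control the \emph{value} of the angle function along $\widetilde h_2$, not its normal derivative along $\widetilde v$ --- and in Case C, which is the relevant one for $\mathbb{S}^2\times\mathbb{R}$, no barriers are constructed at all --- so again one would need one-sided contact along the whole curve to convert a barrier into a derivative bound.

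The paper's proof replaces the pointwise comparison by a global one, adapted from~\cite[Corollary~3.5]{Manzano}. It works on the upper half $\Sigma_+$ of the \emph{whole} compact surface $\Sigma_{H,2,g+1}$ (not on the fundamental piece), so that the entire boundary $\partial\Sigma_+$ lies in $\mathbb{S}^2\times\{0\}$, where both $h$ and $\nu$ vanish. One then considers a function of the form $\psi=h+c_H\arctanh\bigl(\nu/\sqrt{1+4H^2}\bigr)$ for a suitable constant $c_H>0$: this $\psi$ is subharmonic on $\Sigma_+$, vanishes along $\partial\Sigma_+$, and is constant if and only if $\Sigma_+$ is contained in an $H$-sphere. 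The Hopf boundary point lemma therefore gives $\partial\psi/\partial\eta>0$ uniformly along $\partial\Sigma_+$, with $\eta=-\partial_t$, and unpacking this inequality at boundary points (where $h=\nu=0$) yields exactly the strict bound $\eta(\nu)>H+\frac{1}{4H}$, that is, $\mathrm{II}_\Sigma(v',v')<H-\frac{1}{4H}<0$ whenever $0<H<\frac12$ --- your target inequality, with the sphere constant, at every point of $v$ at once. So the $H$-sphere does calibrate the estimate, but it enters through the function $\psi$ (the quantity whose constancy characterizes $H$-spheres) and a global Hopf lemma, not through a tangency at a point; if you wish to complete your argument, this substitution is the missing idea.
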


\begin{proof}
Let $\Sigma_+=\Sigma_{H,2,g+1}\cap\{(p,t)\in\mathbb{S}^2\times\mathbb{R}:t\geq 0\}$, i.e., $\Sigma_+$ is the upper half of $\Sigma_{H,2,g+1}$ consisting of the $4(g+1)$ copies of $\Sigma$ produced by reflection across vertical planes (not using reflection across $\mathbb{S}^2\times\{0\}$). Note that both the height function $h$ (given by $h(p,t)=t$) and the angle function $\nu$ of $\Sigma_+$ vanish along $\partial\Sigma_+$, whereas $h$ is positive (by a simple application of the maximum principle) and $\nu$ is negative in the interior of $\Sigma_+$. We will briefly explain how the argument in~\cite[Corollary 3.5]{Manzano} is adapted to this situation to prove the statement. The function
\[\psi=h+\frac{4H^2}{\sqrt{1+4H^2}}\arctanh\left(\frac{\nu}{\sqrt{1+4H^2}}\right)\in C^\infty(\Sigma_+)\]
is well defined and subharmonic on the compact surface $\Sigma_+$ (with boundary). Since $\psi$ is not constant (it is constant if and only if $\Sigma_+$ is contained in an $H$-sphere), and vanishes along $\partial\Sigma_+$, we deduce that ${\partial\psi}/{\partial\eta}>0$ along $\partial\Sigma_+$, where $\eta=-\partial_t$ is the outer conormal vector field to $\Sigma_+$ along its boundary. If $0<H<\frac{1}{2}$, the above strict inequality implies that the curvature of $\partial\Sigma_+$ in $\mathbb{S}^2\equiv\mathbb{S}^2\times\{0\}$ is positive with respect to the outer conormal to $\pi(\Sigma)\subset\mathbb{S}^2$ along its boundary. Hence we are done since the curve $\pi\circ v$ lies in $\partial\Sigma_+$.
\end{proof}

\begin{lemma}\label{lemma:embeddedness2}
The fundamental piece $\Sigma$ of $\Sigma_{H,2,g+1}$ is a graph.
\end{lemma}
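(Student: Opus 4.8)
The plan is to show that the fundamental piece $\Sigma$ projects injectively to $\mathbb{M}^2(\epsilon)$, so that it is a graph over a subset of $\Omega$. The boundary of $\Sigma$ consists of the three curves $h_1,h_2,h_3$ (lying in vertical planes) and the curve $v$ (lying in the slice $\mathbb{M}^2(\epsilon)\times\{0\}$). Since $\Sigma$ meets these vertical planes orthogonally, the projections $\beta_1,\beta_2,\beta_3=\pi(h_1),\pi(h_2),\pi(h_3)$ are geodesic segments in $\mathbb{M}^2(\epsilon)$, which form (parts of) the sides of the target triangle $\Omega$. The only boundary component whose projection is genuinely curved is $\pi\circ v$, and Lemma~\ref{lemma:embeddedness1} tells us that this curve is \emph{strictly convex} with respect to the outer conormal to $\pi(\Sigma)$. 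The strategy is therefore to first understand the projected boundary $\partial(\pi(\Sigma))$ as a closed curve bounding a domain inside $\Omega$, and then promote injectivity of the boundary to injectivity of the whole projection $\pi|_\Sigma$.

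\emph{First I would} establish that the projected boundary $\pi(\partial\Sigma)$ is a simple closed curve. The three geodesic sides $\beta_1,\beta_2,\beta_3$ meet the correct angles ($\frac{\pi}{2}$ at $\pi(2)$, $\frac{\pi}{k}$ at $\pi(3)$, and so on), so they trace out the two straight legs and part of the configuration of $\Omega$; the remaining side is $\pi\circ v$. Because $\beta_i$ are injective and monotone (as recorded after~\eqref{eqn:ell}, using $\|\beta_i'\|=-\nu$ and the sign of $\nu$), the geodesic portions cannot self-intersect. The convexity of $\pi\circ v$ from Lemma~\ref{lemma:embeddedness1} guarantees that this curved side bends consistently toward the interior, so it neither crosses itself nor the geodesic sides; hence $\partial(\pi(\Sigma))$ is an embedded closed curve bounding a convex (or at least embedded) region $D\subseteq\Omega$. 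I would take care to verify that the orthogonality of $\Sigma$ with the vertical planes forces the projection to meet the sides of $\Omega$ transversally in the right way, so that $D$ really sits inside $\Omega$.

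\emph{Next I would} upgrade boundary embeddedness to global injectivity of $\pi|_\Sigma$ by a degree/maximum-principle argument. The key analytic input is that the angle function $\nu$ is strictly negative on the interior of $\Sigma$ and vanishes only along $v$ (and equals $-1$ only at the isolated vertices $2,3$), as established in Section~\ref{sec:fundamental-piece}. Since $\nu<0$ in the interior, the surface $\Sigma$ is locally a graph over $\mathbb{M}^2(\epsilon)$ there — the projection $\pi|_\Sigma$ is a local diffeomorphism away from the curve $v$, where $\nu=0$. I would argue that $\pi|_\Sigma$ is a proper local diffeomorphism onto its image, and that its image is exactly the domain $D$ bounded by the embedded curve $\pi(\partial\Sigma)$; a standard covering/degree argument (the map has degree one onto a simply connected $D$, being injective on the boundary and a local diffeomorphism in the interior) then forces $\pi|_\Sigma$ to be injective, i.e.\ $\Sigma$ is a graph.

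\emph{The hard part will be} the behavior of the projection along $v$, where $\nu$ vanishes and $\pi|_\Sigma$ degenerates, and ensuring that the surface does not fold back over the projected boundary near this fold curve. This is precisely where the strict convexity in Lemma~\ref{lemma:embeddedness1} is indispensable: it controls the geometry of the fold so that, reflecting $\Sigma$ across the slice $\mathbb{M}^2(\epsilon)\times\{0\}$ to form $\Sigma_+$ together with its mirror image, the two sheets separate cleanly and the graph condition is preserved across $v$. I would phrase this as a boundary-regularity statement for the graph, showing that $D$ is a relatively compact subset of $\Omega$ with $\pi\circ v$ as one edge, so that the full $H$-surface $\Sigma_{H,2,g+1}$, assembled by the reflections described in Section~\ref{sec:conjugate-Plateau-construction}, is embedded inside $\Omega\times\mathbb{R}$ and its symmetric copies tile without overlap. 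Once $\Sigma$ is known to be a graph over $D\subseteq\Omega$, embeddedness of the whole surface follows from the disjointness of the tiles of the $(m,k)$-tessellation, completing the proof of Theorem~\ref{thm:embeddedness}.
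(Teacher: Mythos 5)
Your overall skeleton---show that $\pi(\partial\Sigma)$ is an embedded closed curve bounding a domain $D\subseteq\Omega$, then use that $\nu<0$ in the interior (so $\pi|_\Sigma$ is an interior immersion) plus a degree argument to promote boundary control to global injectivity---is essentially the strategy of the paper. But there is a genuine gap in your first step. You assert that the geodesic sides $\beta_1,\beta_2,\beta_3$ ``trace out the two straight legs and part of the configuration of $\Omega$'' because they are injective and meet at the correct angles. Injectivity and correct angles do \emph{not} prevent $\beta_1$ or $\beta_3$ from overshooting the far vertices of $\Omega$: each $\beta_i$ lies on a great circle through a side of $\Omega$, and on $\mathbb{S}^2$ an injective geodesic arc can be as long as (almost) a full great circle, so without a length bound the endpoints $\pi(1)$ and $\pi(4)$ could land outside $\Omega$, destroying the claim that the projected boundary bounds a region inside $\Omega$. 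The paper devotes half of the proof to exactly these quantitative estimates: for $\beta_1$ one has
\[
\mathrm{Length}(\beta_1)=-\int_{\widetilde h_1}\nu\leq\mathrm{Length}(\widetilde\beta_1)\leq\frac{\pi}{k\sqrt{4H^2+1}}<\frac{\pi}{k},
\]
which uses $H>0$ and the geometry of the Berger sphere $\mathbb{E}(4H^2+1,H)$; and for $\beta_3$ one needs a genuinely nontrivial monotonicity argument (varying $\widetilde\ell$ gives a foliation of a region of the Berger sphere, so $\nu$ along $\beta_3$ depends monotonically on $\ell$, and the maximal length, attained at $\ell_{\mathrm{limit}}$, is $\pi/2$, the bigraph radius of the $\tfrac12$-sphere). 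This is where the hypothesis $0<H<\tfrac12$ does real work, and none of it appears in your proposal.

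A second, smaller issue: you claim that convexity of $\pi\circ v$ alone ``guarantees that this curved side neither crosses itself nor the geodesic sides.'' That is not automatic---a convex arc can certainly hit a geodesic segment. The paper closes this by contradiction: any such crossing would force $\pi(\partial\Sigma)$ to contain a loop that cannot bound an immersed planar domain, contradicting the fact that $\pi|_\Sigma$ is an immersion (its Jacobian is $-\nu$, nonvanishing in the interior); so convexity and the immersion property must be used \emph{together}, not sequentially as in your write-up. Finally, the difficulty you flag as ``the hard part'' (the fold along $v$ where $\nu=0$) is not actually an obstacle: $v$ is a boundary curve of the fundamental piece, the graph statement is for $\Sigma$ alone, and the bigraph structure of $\Sigma_{H,2,g+1}$ comes for free from the reflection across $\mathbb{S}^2\times\{0\}$. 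The real hard part is the pair of length estimates above.
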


\begin{proof}
Let us begin by showing that the curves $\beta_i$ lie in the boundary of the triangle $\Omega$ (see Figure~\ref{fig:first-polygon}). In Section~\ref{sec:fundamental-piece}, we proved that they are injective curves, and $\beta_2$ coincides with a side of $\Omega$ by construction, so it suffices to show that $\pi(1)$ and $\pi(4)$ lie in $\Omega$. In the case of $\pi(1)$, this follows from the fact that the distance between $\pi(2)$ and $p_0$ is $\pi/k$, whereas the length of $\beta_1$ is shorter:
\[\mathrm{Length}(\beta_1)=-\int_{\widetilde{h}_1}\nu\leq\mathrm{Length}(\widetilde{\beta}_1)\leq\frac{\pi}{k\sqrt{4H^2+1}}<\frac{\pi}{k}.\]
Here we have used that the maximum length of $\widetilde\beta_1$ is attained when $\widetilde\beta_2$ is a quarter of a circumference of $\mathbb{S}^2(4H^2+1)$, as well as the fact that $H>0$. As for $\pi(4)$, since varying $\ell$ in the construction produces a foliation of a region of the Berger sphere $\mathbb{E}(4H^2+1,4H)$, this implies that $\nu$ along $\beta_3$ depends monotonically on $\ell$. The maximum value of $\mathrm{Length}(\beta_3)=-\int_{\widetilde{h}_3}\nu$ is attained when $\ell=\ell_{\mathrm{limit}}$, because increasing $\ell$ results in integrating a bigger $-\nu$ along a longer curve. In other words, the length of $\beta_3$ is less than $\pi/2$, the radius of the domain over which the $\frac{1}{2}$-sphere is a bigraph, so $\pi(4)$ lies in the boundary of $\Omega$ (the reader can find a similar argument in~\cite[Lemma~3]{MT}). 

Finally, let us deal with the curve $\pi\circ v$, which is strictly convex by Lemma~\ref{lemma:embeddedness1}, orthogonal to the sides of $\Omega$ at its endpoints, and lies in the interior of $\Omega$ around these endpoints. We claim that the interior of $\pi\circ v$ lies in the interior of $\Omega$. If it were not the case, the convexity ensures that $\pi\circ v$ would intersect itself or other points of a curve $\beta_i$. Again by convexity this would mean that $\pi(\partial\Sigma)$ has a loop in such a way it is not the boundary of an immersed domain of $\mathbb{S}^2$, i.e., $\pi(\Sigma)$ would not be an Alexandrov-embedded domain. This is a contradiction since $\pi_{|\Sigma}:\Sigma\to\pi(\Sigma)$ is an immersion (the angle function, which is equal to minus the Jacobian determinant of $\pi_{|\Sigma}$, does not vanish).
\end{proof}

\begin{remark}\label{rmk:polyhedra}
As pointed out in the introduction, embeddedness also holds in the case of other tessellations of $\mathbb{S}^2$ whenever $0<H<\frac{1}{2}$. We believe embeddedness still holds beyond $H=\frac{1}{2}$ or in $(m,k)$-tessellations with $\frac{1}{m}+\frac{1}{k}\leq\frac{1}{2}$, but the curve in the boundary fails to be convex in general, so our approach does not apply.
\end{remark}

\begin{remark}\label{rmk:isotopy}
For some genera, i.e., $g\in\{1, 3, 5, 11, 19\}$ (see Table~\ref{table}), we have constructed non-congruent constant mean curvature $H$-surfaces ($0<H<\frac{1}{2}$). However, any two compact embedded $H$-surfaces in $\mathbb{S}^2\times\mathbb{R}$ (for any $H>0$) with the same genus $g$ are isotopic. This is a consequence of the fact that such surfaces are symmetric bigraphs over domains of $\mathbb{S}^2$ whose complements are the union of $g+1$ open topological disks (these disks are convex whenever $0<H<\frac{1}{2}$) because of the Alexandrov reflection principle. Hence the original surfaces are isotopic because the corresponding domains of $\mathbb{S}^2$ are isotopic (they are homeomorphic to $\mathbb{R}^2$ minus $g$ points).
\end{remark}

\end{document}